\let\oldlabel=\label
\def\prellabel{\marginparsep=1em\marginparwidth=44pt
\def\label##1{\oldlabel{##1}\ifmmode\else\ifinner\else
\marginpar{{\footnotesize\ \\ \tt ##1}}\fi\fi}}
\newcommand{\mm}{\mathfrak m}
\newcommand{\N}{\mathbb{N}}
\newcommand{\Z}{\mathbb{Z}}
\DeclareMathOperator{\Ann}{Ann} 
\DeclareMathOperator{\chara}{char}
 \DeclareMathOperator{\GL}{GL}
\DeclareMathOperator{\Image}{Im}
\DeclareMathOperator{\reg}{reg}
\DeclareMathOperator{\signum}{sign}
\DeclareMathOperator{\Tor}{Tor} 
 \DeclareMathOperator{\ind}{index}
\DeclareMathOperator{\fc}{c} \DeclareMathOperator{\ffb}{b}
\DeclareMathOperator{\dirsum}{\oplus}
\DeclareMathOperator{\pnt}{\raise 0.5mm \hbox{\large\bf.}}
\newtheorem{thm}{\bf Theorem}[section]
\newtheorem{lem}[thm]{\bf Lemma}
\newtheorem{cor}[thm]{\bf Corollary}
\newtheorem{prop}[thm]{\bf Proposition}
\newtheorem{quest}[thm]{\bf Question}
\theoremstyle{definition}
\newtheorem{rem}[thm]{\bf Remark}
\let\signum=\relax
\title{Koszul cycles}
\author{Winfried Bruns}
\address{Universit\"at Osnabr\"uck, Institut f\"ur Mathematik, 49069 Osnabr\"uck, Germany}
\email{wbruns@uos.de}
\author{Aldo Conca}
\address{Dipartimento di Matematica, Universit\'{a} di Genova, Via Dodecaneso 35, 16146 Genova,
Italy} \email{conca@dima.unige.it}
\author{Tim R\"omer}
\address{Universit\"at Osnabr\"uck, Institut f\"ur Mathematik, 49069 Osnabr\"uck, Germany}
\email{troemer@uos.de}
\dedicatory{ }
\begin{document}

\begin{abstract}
We prove regularity bounds for Koszul cycles holding for every ideal
of dimension $\leq 1$ in a polynomial ring; see Theorem \ref{thm1}.
In Theorem \ref{check} we generalize the ``$c+1$" lower bound for
the Green-Lazarsfeld index of Veronese rings proved in \cite{BCR} to
the multihomogeneous setting. For the Koszul complex of the $c$-th
power of the maximal ideal in a Koszul ring we prove that the cycles
of homological degree $t$ and internal degree $\geq t(c+1)$ belong
to the $t$-th power of the module of $1$-cycles; see Theorem
\ref{maincyc}.

\end{abstract}


\maketitle

%
%
%
\section{Introduction}
\label{sec:intro} The Koszul complex and its homology are central
objects in commutative algebra. Vanishing theorems for Koszul
homology are the key to many open questions. The goal of the paper
is the study of regularity bounds  for Koszul cycles and Koszul
homology of ideals in standard graded rings. Our  original
motivation comes from the study of the syzygies of Veronese
varieties and, in particular, the conjecture of Ottaviani and
Paoletti \cite{OP} on their Green-Lazarsfeld index, see \cite{BCR}.

In Section \ref{Canonical maps} we fix the  notation and describe
some canonical maps between modules of Koszul cycles. Given  a
standard graded ring $R$ with maximal homogeneous ideal $\mm$, a
homogeneous ideal $I$ and a finitely generated graded module $M$, we
let $Z_t(I,M)$ denote the module of Koszul cycles of homological
degree $t$. Under a mild assumption, we show  in \ref{dirsum}  that
$Z_{s+t}(I,M)$ is a direct summand of  $Z_{s}(I,N)$  where
$N=Z_t(I,M)$.

Section \ref{Bang}  is devoted to the description of
(Castelnuovo-Mumford) regularity bounds for Koszul cycles and
homology.  We prove bounds of the following type:
\begin{equation}
\label{mamain} \reg_R(Z_t(I,M))\leq t(c+1)+\reg_R(M)+v
\end{equation}
under assumptions on $\dim M/IM$. Here $\reg_R(N)$ denotes the
(relative) Castelnuovo-Mumford regularity of a finitely generated
$R$-module $N$.   Note that $\reg_R(N)$ is the ordinary
Castelnuovo-Mumford regularity if $R$ is the polynomial ring.
Furthermore it is known that  $\reg_R(N)$ is finite if $R$ is a
Koszul algebra. If $R$ is Koszul and $\dim M/IM=0$, then we prove
that (\ref{mamain}) holds with $v=0$ and where $c$ is such that
$\mm^c\subset I+\Ann(M)$ and $I$ is generated in degrees $\leq c$, see \ref{regb1}.  
In \ref{thm1} we prove that if $R$ is a polynomial ring of characteristic $0$ or big enough
and $\dim M/IM\leq 1$ then   (\ref{mamain}) holds with $v=0$ and
$c=\reg_R(I)$.  Furthermore, if $R$ is a polynomial ring and $\dim
M/IM=0$, then we show in \ref{greeny} that (\ref{mamain}) holds with
$c\geq $ the largest degree of a generator of $I$ and $v=\dim
[R/I]_c$.

 We also give examples showing that  the inequality
\begin{equation}
\label{mamain2} \reg_R Z_t(I,M)\leq t(\reg_R(I)+1)+\reg_R(M)
\end{equation}
cannot hold in general (i.e. without restriction on the dimension of
$M/IM$). However (\ref{mamain2})  holds if  $R$ is a polynomial
ring, $M=R/J$ and both $I$ and $J$ are strongly stable monomial
ideals, see \ref{piper} and \ref{sato}. We leave it as an open
question  whether (\ref{mamain2}) holds when $M=R$ and $R$ is a
polynomial ring.

In Section  \ref{SeVe} we prove that, given a vector
$c=(c_1,\dots,c_d)\in \N_+^d$, the Segre-Veronese ring associated to
$c$ over a field of characteristic $0$ or big enough, has a
Green-Lazarsfeld index larger than or equal to $\min(c)+1$,  see
\ref{check}. This result was announced in \cite{BCR} and improves
the bound of Hering, Schenck and Smith \cite{HSS} by $1$.

In Section \ref{Gency} we analyze  the generators  of the module
$Z_t=Z_t(\mm^c,R)$ under the assumption that $R$ has characteristic
$0$ or big enough.   If $R$ is Koszul  we prove that $Z_t/ Z_1^t$
vanishes  in degrees $\geq t(c+1)$, \ref{maincyc}.  Here $Z_1^t$
denotes the image of the canonical map $\wedge^t Z_1\to Z_t$. This
allows us to deduce that the $c$-th Veronese subring of a polynomial
ring $S$ satisfies the property $N_{2c}$ if and only if
$H_1(\mm^c,S)^{2c}=0$, see \ref{prcpt}.  Finally, we prove that  the
cycles given in \cite{BCR}  generate   $Z_2$; see \ref{gen2}.

\section{Notation and generalities}
\label{Canonical maps} In this section we collect notation and
general facts about maps between modules of Koszul cycles. Let $R$
be a ring, $F$ be a free $R$-module of rank $n$, $\phi:F\to R$ be an
$R$-linear map and $M$ be an $R$-module. All tensor products are
over $R$. We consider the Koszul complexes
$K(\phi,R)=\bigoplus_{t=0}^n K_t(\phi,R)=\bigwedge^\bullet F$ and
$K(\phi,M)=\bigoplus_{t=0}^nK_t(\phi,M)=\bigwedge^\bullet F\otimes
M$. The complex  $K(\phi,M)$ can be seen as a module over the
exterior algebra $K(\phi,R)$. For $a\in K(\phi,R)$ and $f\in
K(\phi,M)$ the multiplication will be denoted by $a.f$. The
differential of $K(\phi,R)$ and $K(\phi,M)$ will be denoted simply
by $\phi$ and it satisfies
$$\phi(a.f)= \phi(a).f + (-1)^s a.\phi(f)$$
for all $a\in K_s(\phi,R)$ and $f\in K(\phi,M)$. We let
$Z_t(\phi,M)$, $B_t(\phi,M)$, $H_t(\phi,M)$  denote the cycles, the
boundaries and the homology in homological degree $t$ and set
$Z(\phi,M)=\dirsum Z_t(\phi,M)$, and so on for cycles, boundaries
and homology. One knows that $Z(\phi,R)$ is a subalgebra of
$K(\phi,R)$ and that $B(\phi, R)$ is an ideal of $Z(\phi,R)$ so that
the homology $H(\phi,R)$ is itself an algebra. More generally,
$Z(\phi,M)$ is a $Z(\phi,R)$-module. We let $Z_s(\phi,R)Z_t(\phi,M)$
denote the image of the multiplication map $Z_s(\phi,R)\otimes
Z_t(\phi,M)\to Z_{s+t}(\phi,M)$. Similarly, $Z_1(\phi,R)^t$ will
denote the image of the map $\bigwedge^t Z_1(\phi,R)\to
Z_{t}(\phi,R)$.

In the graded setting the map $\phi$ will be assumed to be of degree
$0$ and $F$ will be a direct sum of shifted copies of $R$. In this
way the Koszul complex $K(\phi,M)$ inherits a graded structure for
the map $\phi$ and the module $M$. So cycles, boundaries and
homology have an induced graded structure. An index on the left of a
graded module always denotes the selection of the homogeneous
component of that degree.  If $R$ is standard graded over a field
$K$ with maximal homogeneous ideal $\mm$  all the invariants we are
going to study depend actually only on the image of $\phi$ and not
on the map itself as long as $\ker \phi\subseteq \mm F$. So, if
$J=\Image \phi$, we will sometimes denote $K(\phi,R)$ simply by
$K(J,R)$ and so on.

Fix a basis of the free module $F$, say $\{e_1,\dots,e_n\}$. Given
$I=\{i_1,\dots,i_s\}\subset [n]$ with $i_1<i_2<\dots<i_s$ we write
$e_I$ for the corresponding basis element $e_{i_1}\wedge \cdots
\wedge e_{i_s}$ of $\bigwedge^s F$. If $\phi(e_i)=u_i\in J$ we will
also use the symbol $[u_{i_1},\dots,u_{i_s}]$ to denote $e_I$.

For disjoint subsets $A,B\subset [n]$ we set $\epsilon(A,B)=\#\{
(a,b)\in A\times B : a>b\}$ and
$$\sigma(A,B)=(-1)^{\epsilon(A,B)}.$$
One has
$$e_Ae_B=\sigma(A,B)e_{A\cup B}.$$
For further application we record the following:

\begin{lem}
\label{checksign} For disjoint subsets $A,B,C$ of $[n]$ one has
$$\sigma(A\cup B,C)\sigma(B,A)=\sigma(B,A\cup C)\sigma(A,C).$$
\end{lem}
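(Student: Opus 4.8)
The plan is to deduce the identity from associativity of the exterior algebra $\bigwedge^\bullet F$, applied to the triple product $e_B e_A e_C$. Grouping as $(e_B e_A)e_C$ and using $e_X e_Y=\sigma(X,Y)e_{X\cup Y}$ twice gives $\sigma(B,A)\,\sigma(A\cup B,C)\,e_{A\cup B\cup C}$, where I use the set equality $B\cup A=A\cup B$ to write $\sigma(B\cup A,C)=\sigma(A\cup B,C)$. Grouping instead as $e_B(e_A e_C)$ gives $\sigma(A,C)\,\sigma(B,A\cup C)\,e_{A\cup B\cup C}$. Since $A,B,C$ are pairwise disjoint, the basis element $e_{A\cup B\cup C}$ is nonzero, so comparing coefficients yields the claim.

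As an alternative purely combinatorial verification, I would work at the level of the exponents $\epsilon$. The key observation is the additivity $\epsilon(X\cup Y,Z)=\epsilon(X,Z)+\epsilon(Y,Z)$ for disjoint $X,Y$, and symmetrically $\epsilon(X,Y\cup Z)=\epsilon(X,Y)+\epsilon(X,Z)$ for disjoint $Y,Z$; both follow at once by partitioning the counted set of inversions according to which of the two subsets the relevant index lies in. Applying these, the exponent on the left becomes $\epsilon(A,C)+\epsilon(B,C)+\epsilon(B,A)$ and the exponent on the right becomes $\epsilon(B,A)+\epsilon(B,C)+\epsilon(A,C)$; these agree termwise, so the two sides coincide already as integers, and hence the signs are equal.

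There is no genuine obstacle here: the statement is an elementary sign-bookkeeping identity. The only points requiring a little care are to track the set equality $B\cup A=A\cup B$ so that exactly $\sigma(A\cup B,C)$ (rather than a reordered variant) appears, and, in the combinatorial route, to confirm the disjointness hypotheses that license each additivity splitting.
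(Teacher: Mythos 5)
Your proposal is correct, and it actually contains two complete arguments. The ``alternative purely combinatorial verification'' is precisely the paper's own proof: the paper's entire argument consists of the two additivity identities $\epsilon(A\cup B,C)=\epsilon(A,C)+\epsilon(B,C)$ and $\epsilon(B,A\cup C)=\epsilon(B,A)+\epsilon(B,C)$, after which the two exponents visibly agree as integers, exactly as you say. Your primary route, via associativity of the triple product $e_Be_Ae_C$ in $\bigwedge^\bullet F$, is a genuinely different derivation: it trades the inversion count for the structural fact $(e_Be_A)e_C=e_B(e_Ae_C)$, which has the merit of explaining \emph{why} the identity must hold --- it is exactly the associativity constraint on the signs $\sigma$ --- and it matches how the lemma is used later (in the proof of Lemma \ref{gamma}, where products of three basis elements are regrouped). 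One small caveat on the associativity route: the lemma is a statement about integers $\pm 1$, while $R$ in Section \ref{Canonical maps} is an arbitrary ring, so if $1=-1$ in $R$ (characteristic $2$) the comparison of coefficients in $\bigwedge^\bullet F$ does not distinguish the two signs; this is repaired by running the argument over $\Z$, i.e.\ for $F$ a free $\Z$-module, which suffices since the identity does not involve $R$ at all. With that remark, both arguments are complete, and the combinatorial one coincides with the paper's.
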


\begin{proof} Just use the fact that
$\epsilon(A\cup B,C)=\epsilon(A,C)+\epsilon(B,C)$ and
$\epsilon(B,A\cup C)=\epsilon(B,A)+\epsilon(B,C)$.
\end{proof}

Any element $f\in \bigwedge^s F\otimes M$ can be written uniquely as
$f=\sum e_I\otimes m_I$ with $m_I\in M$ where the sum is over the
subsets of cardinality $s$ of $[n]$. If $m_I= 0$ then we will say
that $e_I$ does not appear in $f$. For every $f\in K_{s+t}(\phi, M)$
and for every $I\subset [n]$ with $s=\# I$ we have a unique
decomposition
\begin{equation}
\label{deco} f=a_I+ e_I.b_I
\end{equation}
with $a_I\in K_{s+t}(\phi, M)$ and $b_I\in K_{t}(\phi, M)$, and,
furthermore, $e_J$ does not appear in $a_I$ whenever $J\supset I$
and $e_S$ does not appear in $b_I$ whenever $S\cap I\neq \emptyset$.
With the notation above we have:

\begin{lem}\label{le1}
For every $f\in K_{s+t}(\phi, M)$ we have:
\begin{itemize}
\item[(a)] $\sum_I e_I.b_I =\binom{t+s}{s} f$ where $\sum_I$ stands
for the sum extended to all the subsets $I\subset [n]$ with $s=\#
I$.
\item[(b)] if $f\in Z_{s+t}(\phi, M)$, then $b_I\in Z_{t}(\phi, M)$
for every $I$ with $s=\# I$.
\end{itemize}
\end{lem}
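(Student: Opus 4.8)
The plan is to make the decomposition (\ref{deco}) completely explicit in terms of the standard expansion $f=\sum_{|T|=s+t}e_T\otimes m_T$ and then read off both statements. First I would observe that, by the defining properties of (\ref{deco}), the summand $e_I.b_I$ collects exactly the monomials of $f$ that are ``divisible by $e_I$''. Indeed, $b_I$ involves only $e_S$ with $S\cap I=\emptyset$, so every monomial occurring in $e_I.b_I$ has the form $\sigma(I,S)e_{I\cup S}$ and hence contains $I$; conversely, for any $T\supseteq I$ the element $e_T$ does not appear in $a_I$, so the coefficient of $e_T$ in $e_I.b_I$ must agree with its coefficient $m_T$ in $f$. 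This yields
$$e_I.b_I=\sum_{T\supseteq I,\ |T|=s+t}e_T\otimes m_T,$$
the $I$-divisible part of $f$, and correspondingly $b_I=\sum_{S\cap I=\emptyset,\ |S|=t}\sigma(I,S)\,e_S\otimes m_{I\cup S}$.

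For part (a) this reduces to a counting argument. Summing the identity above over all subsets $I$ of cardinality $s$, a fixed term $e_T\otimes m_T$ with $|T|=s+t$ is counted once for each $s$-subset $I\subseteq T$, and there are exactly $\binom{s+t}{s}$ of these; hence $\sum_I e_I.b_I=\binom{s+t}{s}f$.

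For part (b) I would apply the differential to (\ref{deco}) and use the Leibniz rule, so that
$$0=\phi(f)=\phi(a_I)+\phi(e_I).b_I+(-1)^s e_I.\phi(b_I),$$
and then separate the three summands according to whether the monomials occurring in them are divisible by $e_I$. The key observation is that $\phi(a_I)$ and $\phi(e_I).b_I$ contribute only monomials \emph{not} divisible by $e_I$: the monomials of $a_I$ already fail to contain $I$ and applying $\phi$ only shrinks their support, while $\phi(e_I)=\sum_k(-1)^{k-1}u_{i_k}e_{I\setminus\{i_k\}}$ produces, after multiplication by the $e_S$ ($S\cap I=\emptyset$) appearing in $b_I$, only monomials $e_{(I\setminus\{i_k\})\cup S}$, each of which misses $i_k\in I$. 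By contrast, $\phi(b_I)$ is supported on monomials $e_{S'}$ with $S'\cap I=\emptyset$, so $e_I.\phi(b_I)$ contributes only monomials divisible by $e_I$. Comparing the $e_I$-divisible parts of the displayed equation therefore forces $e_I.\phi(b_I)=0$. Finally, multiplication by $e_I$ is injective on the span of the monomials disjoint from $I$ (distinct such $S'$ give distinct $e_{I\cup S'}$, and $\sigma(I,S')=\pm1$), and $\phi(b_I)$ lies in that span; hence $\phi(b_I)=0$, i.e.\ $b_I\in Z_t(\phi,M)$.

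I expect the only genuine obstacle to be the support bookkeeping in the last step: once the three terms are shown to split cleanly into an $e_I$-divisible part and its complement, the signs become irrelevant for (b), since there one only needs a vanishing, and in (a) they cancel automatically because each $m_T$ is recovered with coefficient $\sigma(I,S)^2=1$.
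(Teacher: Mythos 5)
Your proof is correct and takes essentially the same route as the paper's: part (a) by counting that each term $e_T\otimes m_T$ is picked up once for each of the $\binom{s+t}{s}$ subsets $I\subseteq T$ with $\#I=s$, and part (b) by applying the Leibniz rule to the decomposition $f=a_I+e_I.b_I$ and observing that $\phi(a_I)+\phi(e_I).b_I$ contains no monomial $e_J$ with $J\supseteq I$, forcing $\phi(b_I)=0$. The additional bookkeeping you supply (the explicit formula for $b_I$ and the injectivity of multiplication by $e_I$ on terms disjoint from $I$) is precisely what the paper's terser argument leaves implicit.
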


\begin{proof} For (a) one writes $f=\sum e_J \otimes m_J$ with $J\subset [n]$
with $\#J=s+t$ and $m_J\in M$. Then one observes that $e_J.m_J$
appears in $e_I.b_I$ iff $I\subset J$. Hence $e_J.m_J$ appears in
$\sum_I e_I.b_I$ exactly $\binom{t+s}{s}$ times. For (b) one applies
the differential $0=\phi(f)=\phi(a_I)+ \phi(e_I).b_I +(-1)^s
e_I.\phi(b_I)$ and since $e_J$ does not appear in $\phi(a_I)+
\phi(e_I).b_I$ whenever $J\supseteq I$ then $\phi(b_I)$ must be $0$.
\end{proof}

The multiplication $K_s(\phi,R)\otimes K_t(\phi,M)\to K_{s+t}(\phi,
M)$ can be interpreted as a map
$$K_s (\phi, K_t(\phi,M) ) \to K_{s+t}(\phi, M)$$
defined by $a\otimes f \to a.f$. Restricting the domain of the map
to $K_s (\phi,Z_t(\phi, M))$ we get a map
$$ K_s (\phi, Z_t(\phi, M)) \to K_{s+t}(\phi, M)$$
which is indeed a map of complexes. So it induces a map
$$\alpha_t: Z_s (\phi, Z_t(\phi, M)) \to Z_{s+t}(\phi, M)$$
defined by
$$\sum a\otimes f \in Z_s (\phi, Z_t(\phi, M)) \to \sum a.f.$$

Now we define a map
$$\gamma_t: K_{s+t}(\phi, M) \to K_s (\phi, K_t(\phi, M))$$
by the formula
$$\gamma_t(f)= \sum_I e_I\otimes b_I$$
where the sum is over the $I\subset [n]$ with $\#I=s$ and $b_I$ is
determined by the decomposition (\ref{deco}). We claim:

\begin{lem}
\label{gamma} The map $\gamma_t: K(\phi,M)\to K(\phi,K_t(\phi,
M))[-t] $ is a map of complexes.
\end{lem}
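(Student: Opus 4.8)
The plan is to show that $\gamma_t$ commutes with the differentials, i.e. that $\gamma_t \circ \phi = \phi \circ \gamma_t$, where on the right-hand side $\phi$ denotes the differential of the complex $K(\phi, K_t(\phi,M))[-t]$. Since the map is defined via the decomposition (\ref{deco}), the first step is to understand how the differential of $K(\phi,M)$ interacts with that decomposition. Write $f = a_I + e_I.b_I$ as in (\ref{deco}). I would apply the Leibniz rule to $e_I.b_I$ to get
\begin{equation*}
\phi(f) = \phi(a_I) + \phi(e_I).b_I + (-1)^s e_I.\phi(b_I),
\end{equation*}
and then the key observation is that in the decomposition of $\phi(f)$ with respect to the same index set $I$, only the last term $(-1)^s e_I.\phi(b_I)$ contributes a factor of $e_I$ times an element in which no $e_S$ with $S \cap I \neq \emptyset$ appears; the terms $\phi(a_I)$ and $\phi(e_I).b_I$ cannot contribute such a factor because $e_J$ does not appear in $a_I$ for $J \supseteq I$, and $\phi(e_I)$ drops one index out of $I$.

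More precisely, I would argue that if we denote by $b_I(g)$ the component attached to $e_I$ in the decomposition (\ref{deco}) of an arbitrary element $g \in K_{s+t}(\phi,M)$, then $b_I(\phi(f)) = (-1)^s \phi(b_I(f))$. The sign $(-1)^s$ is exactly the sign appearing in the differential of the complex $K(\phi, K_t(\phi,M))$ in homological degree $s$ (since the differential there acts by $a \otimes y \mapsto \phi(a) \otimes y + (-1)^s a \otimes \phi(y)$, and the shift $[-t]$ does not alter the sign convention in the relevant slot). Granting this componentwise identity, summing over all $I$ with $\#I = s$ gives
\begin{equation*}
\gamma_t(\phi(f)) = \sum_I e_I \otimes b_I(\phi(f)) = \sum_I e_I \otimes (-1)^s \phi(b_I(f)),
\end{equation*}
which I must then match against $\phi(\gamma_t(f)) = \phi\bigl(\sum_I e_I \otimes b_I(f)\bigr)$, expanding the latter via the differential formula of the target complex.

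The main obstacle, and the only genuinely delicate point, is the bookkeeping of the two pieces of the target differential: $\phi(\gamma_t(f))$ produces both a $\sum_I \phi(e_I) \otimes b_I(f)$ term and a $\sum_I (-1)^s e_I \otimes \phi(b_I(f))$ term. The second of these matches $\gamma_t(\phi(f))$ by the componentwise identity above, so I must check that the first term $\sum_I \phi(e_I) \otimes b_I(f)$ vanishes, or rather reorganizes to zero, after regrouping. This is where the combinatorics of the signs $\sigma(A,B)$ enter: when $\phi(e_I) = \sum_{j} \pm\, u_j\, e_{I \setminus j}$, the element $e_{I\setminus j} \otimes b_I(f)$ must be compared with the corresponding component $b_{I\setminus j}(f)$, and I expect the contributions to cancel in pairs once the decomposition (\ref{deco}) is used to re-express $b_{I \setminus j}(f)$ in terms of the $b_I(f)$. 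I anticipate that Lemma \ref{checksign} is precisely the sign identity needed to carry out this cancellation cleanly, so the heart of the proof is a careful, sign-tracked rewriting of $\sum_I \phi(e_I) \otimes b_I(f)$ rather than any conceptual difficulty.
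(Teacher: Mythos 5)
There is a genuine gap, and it stems from a misreading of the target complex. By the paper's conventions, $K(\phi, K_t(\phi,M))$ is the Koszul complex of $\phi$ with coefficients in the \emph{module} $K_t(\phi,M)$: its differential sends $a\otimes y$ to $\phi(a)\otimes y$, and nothing else. The coefficient slot carries no differential, and the shift $[-t]$ is merely a shift of homological degree, not the totalization of a double complex; so the formula $a\otimes y\mapsto \phi(a)\otimes y+(-1)^s a\otimes\phi(y)$ on which your entire bookkeeping rests is not the differential of the target. This derails the plan in two ways. First, the term $\sum_I \phi(e_I)\otimes b_I(f)$, which you propose to ``reorganize to zero,'' is the \emph{whole} of $\phi(\gamma_t(f))$, and it does not vanish: already for $f=e_1\wedge e_2$ and $s=t=1$ one gets $\sum_I\phi(e_I)\otimes b_I(f)=1\otimes\bigl(u_1e_2-u_2e_1\bigr)\neq 0$ in $K_0(\phi,K_1(\phi,R))$, where $u_i=\phi(e_i)$. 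The lemma is precisely the assertion that this sum equals $\gamma_t(\phi(f))$, so by deferring it to a cancellation that cannot happen you skip the entire content of the statement. Second, the expression you match against $\gamma_t(\phi(f))$, namely $\sum_{\#I=s} e_I\otimes b_I(\phi(f))$ with $b_I(\phi(f))\in K_{t-1}(\phi,M)$, is not $\gamma_t(\phi(f))$: since $\phi(f)\in K_{(s-1)+t}(\phi,M)$, the map $\gamma_t$ decomposes it along subsets of size $s-1$ with coefficients still in $K_t(\phi,M)$; what you have written down is $\gamma_{t-1}(\phi(f))$.

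To be clear about what is salvageable: your componentwise identity $b_I(\phi(f))=(-1)^s\phi(b_I(f))$ (for $\#I=s$) is true, and your Leibniz-rule argument for it is fine --- it is essentially the proof of Lemma \ref{le1}(b). But it expresses compatibility of the family $(\gamma_t)_t$ with the differential acting on the \emph{coefficients}, i.e.\ it relates $\gamma_{t-1}\circ\phi$ to $(\mathrm{id}\otimes\phi)\circ\gamma_t$, whereas the lemma concerns the differential acting on the \emph{exterior} factor. What actually has to be proved is
\begin{equation*}
\sum_{\#I=s}\phi(e_I)\otimes b_I(f)\;=\;\sum_{\#I'=s-1}e_{I'}\otimes b_{I'}(\phi(f)).
\end{equation*}
The paper does this by reducing to $M=R$ (using $K(\phi,M)=K(\phi,R)\otimes M$) and to $f=e_J$; then both sides become sums over partitions $J=A\cup B\cup\{p\}$ with $\#B=t$, and the equality reduces to the sign identity $\sigma(A\cup\{p\},B)\,\sigma(\{p\},A)=\sigma(\{p\},A\cup B)\,\sigma(A,B)$, which is where Lemma \ref{checksign} enters --- not as a tool for a cancellation to zero, but as the identification of two nonzero sums. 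You correctly anticipated that Lemma \ref{checksign} is the key sign identity, but the framework in which you intend to deploy it would collapse before you got there.
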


\begin{proof}
Since $K(\phi,M)=K(\phi,R)\otimes M$ and we have $K(\phi,K_t(\phi,
M))=K(\phi,K_t(\phi, R))\otimes M$ it is enough to prove the
statement in the case $M=R$. Then it is enough to check
$$
\gamma_t\circ \phi(e_J)=\phi\circ \gamma_t(e_J)\quad \mbox{for every
} J\subset [n] \mbox{ with } \#J=s+t.
$$
Note that
$$\gamma_t(e_J)=\sum \sigma(A,B) e_A \otimes e_B$$
where the sum is over all the $B$ such that $\#B=t$ and
$A=J\setminus B$. Then
$$\phi\circ \gamma_t(e_J)=\sum \sigma(A\cup\{p\},B)\sigma(\{p\},A ) \phi(e_p) e_A\otimes e_B$$
and
$$ \gamma_t\circ \phi(e_J)=\sum \sigma(\{p\},A\cup B)\sigma(A,B ) \phi(e_p) e_A\otimes e_B$$
where in both cases the sum is over all the partitions of $J$ into
three parts $A,B,\{p\}$ with $\#B=t$. So we have to check that
$$\sigma(A\cup\{p\},B)\sigma(\{p\},A )= \sigma(\{p\},A\cup B)\sigma(A,B ).$$
This is a special case of \ref{checksign}.
\end{proof}

It follows that $\gamma_t$ gives, by restriction, a map
$$Z_{s+t}(\phi, M) \to Z_s (\phi, K_t(\phi, M)).$$
By virtue of \ref{le1}, its image is indeed contained in $Z_s (\phi,
Z_t(\phi, M))$. So we have a map
$$\beta_t: Z_{s+t}(\phi, M) \to Z_s (\phi, Z_t(\phi, M))$$
and, by virtue of Lemma \ref{le1}, we have
$$\alpha_t \circ \beta_t (f)=\binom{t+s}{s} f \ \ \ \
\mbox{ for all } f\in Z_{s+t}(\phi, M).$$

An immediate consequence:

\begin{lem}
\label{dirsum} Assume $\binom{t+s}{s}$ is invertible in $R$. Then
$Z_{s+t}(\phi, M)$ is a direct summand of $Z_s (\phi, Z_t(\phi,
M))$.
\end{lem}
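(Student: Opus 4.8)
The plan is to exploit the two maps $\alpha_t$ and $\beta_t$ that have already been constructed, together with the identity $\alpha_t \circ \beta_t = \binom{t+s}{s}\cdot \id$ on $Z_{s+t}(\phi, M)$ that was verified using Lemma \ref{le1}. The point of that identity is that it exhibits $\beta_t$ as a section of $\alpha_t$ up to the scalar $\binom{t+s}{s}$; once this scalar is invertible, a standard splitting argument does the rest.

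Concretely, first I would set $\lambda = \binom{t+s}{s}$ and, using the hypothesis that $\lambda$ is invertible in $R$, define the rescaled $R$-linear map $\beta_t' = \lambda^{-1}\beta_t : Z_{s+t}(\phi, M) \to Z_s(\phi, Z_t(\phi, M))$. The established relation then reads $\alpha_t \circ \beta_t' = \id$ on $Z_{s+t}(\phi, M)$.

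Next I would invoke the elementary fact that a split injection realizes its source as a direct summand of its target. Since $\alpha_t \circ \beta_t' = \id$, the map $\beta_t'$ is injective, $\alpha_t$ is surjective, and one obtains the direct sum decomposition
$$Z_s(\phi, Z_t(\phi, M)) = \Image(\beta_t') \oplus \Ker(\alpha_t),$$
where $\Image(\beta_t') \cong Z_{s+t}(\phi, M)$ via $\beta_t'$. This is exactly the assertion that $Z_{s+t}(\phi, M)$ is a direct summand of $Z_s(\phi, Z_t(\phi, M))$.

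There is essentially no obstacle here, since all the substantive work was already carried out in constructing $\alpha_t$, $\beta_t$ and in verifying the composition formula. The only subtlety worth recording is that the argument genuinely needs $\lambda$ to be a unit, not merely a nonzerodivisor: in the graded situation over a field $K$ this amounts to $\binom{t+s}{s}\neq 0$ in $K$, which can fail in small positive characteristic and is precisely why the invertibility hypothesis cannot be dropped.
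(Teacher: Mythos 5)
Your proof is correct and is essentially the paper's own argument: the paper states the lemma as an immediate consequence of the identity $\alpha_t \circ \beta_t = \binom{t+s}{s}\,\mathrm{id}$ established just before it, and your write-up merely makes the standard splitting step explicit by rescaling $\beta_t$ to a genuine section. Nothing is missing.
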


One can easily check that, in the graded setting, the maps described in this section are graded and of degree $0$.

\section{Bounds for Koszul cycles}
\label{Bang}

In this section we consider a field $K$ and a standard graded
$K$-algebra $R$ with maximal homogeneous ideal $\mm$. In other
words, $R$ is of the form $S/J$ where $S$ is a polynomial ring over
$K$ with the standard grading and $J$ is a homogeneous of $S$. We
will consider a finitely generated graded $R$-module $M$.  Let
$\beta_{i,j}^R(M)=\dim_K \Tor_i^R(M,K)_j$ be the \emph{graded Betti
numbers} of $M$ over $R$. We define the number
$$
t_i^R(M)=\max\{j \in \Z : \beta^R_{i,j}(M)\neq 0\},
$$
whenever  $\Tor_i^R(M,K)\neq 0$ and $t_i^R(M)=-\infty$ otherwise.
The \emph{Castelnuovo-Mumford regularity}  of $M$ over $R$ is
$$
\reg_R(M)=\sup \{ t^R_i(M)-i : i\in \N \}.
$$
Recall that  $R$ is a \emph{Koszul  algebra} if $\reg_R(K)=0$. One
knows that $\reg_R(M)$ is finite for every finitely generated module
$M$ if $R$ is a Koszul algebra, see Avramov and Eisenbud  \cite{AE}.
One says that $R$ has the property $N_p$ if its defining ideal $J$
is generated by quadrics and the syzygies of the quadrics are linear
for $p-1$ steps, that is, if   $t_i^S(R)\leq i+1 \mbox{ for }
i=1,\dots,p$. The  \emph{Green-Lazarsfeld index}  of $R$ is the
largest number $p$ such that $R$ has the property $N_p$,  that is,
$$\ind(R)=\max\{ p : t_i^S(R)\leq i+1 \mbox{ for } i=1,\dots,p\}.$$
\medskip

\noindent{ \bf Conventions:} Just to avoid endless repetitions,
throughout this section ideals will be homogeneous, modules will be
finitely generated and graded, linear maps will be graded of degree
$0$. Furthermore $I$ will always denote an ideal and $M$ a module of
the current ring. The current ring will be denoted by $S$ if it is
the polynomial ring over a field $K$ or by $R$ if it is a standard
graded $K$-algebra and $\mm$ will denote its maximal homogeneous
ideal.
\medskip

We start with a well-known fact that is easy to prove:

\begin{lem}\label{annih}
One has  $$I+\Ann(M)\subseteq \Ann(M/IM)  \subseteq
\sqrt{I+\Ann(M)}.$$
\end{lem}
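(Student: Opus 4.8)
The plan is to prove the two inclusions separately, the first by a direct check and the second via the classical determinant (Cayley--Hamilton) trick.

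For the inclusion $I+\Ann(M)\subseteq \Ann(M/IM)$ I would argue that $\Ann(M/IM)$ is an ideal containing both generating pieces. Indeed, if $a\in I$ then $aM\subseteq IM$, so $a$ kills $M/IM$; and if $a\in \Ann(M)$ then $aM=0\subseteq IM$, so again $a\in\Ann(M/IM)$. Since $\Ann(M/IM)$ is an ideal, it contains the sum $I+\Ann(M)$, giving the first inclusion.

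For the inclusion $\Ann(M/IM)\subseteq \sqrt{I+\Ann(M)}$, take $a\in\Ann(M/IM)$, so that $aM\subseteq IM$. Choose generators $m_1,\dots,m_r$ of the finitely generated module $M$. Since $am_i\in IM$, I can write $am_i=\sum_{j} c_{ij}m_j$ with all $c_{ij}\in I$. Setting $C=(c_{ij})$ and $A=a\,\mathrm{Id}_r - C$, the relations become $A\,\mathbf{m}=0$ for the column vector $\mathbf{m}=(m_1,\dots,m_r)^{t}$. Multiplying by the adjugate of $A$ yields $\det(A)\,m_j=0$ for every $j$, hence $\det(A)\in\Ann(M)$. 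The key observation is then to expand $\det(A)=\det(a\,\mathrm{Id}_r - C)$: the leading term in $a$ is $a^{r}$, while every remaining term in the expansion contains at least one factor $c_{ij}\in I$, so $\det(A)=a^{r}+q$ for some $q\in I$. Combining these, $a^{r}=\det(A)-q\in \Ann(M)+I=I+\Ann(M)$, which shows $a\in\sqrt{I+\Ann(M)}$.

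The argument is essentially bookkeeping, and the only point requiring care is the expansion of $\det(a\,\mathrm{Id}_r - C)$: one must verify that every monomial other than $a^{r}$ genuinely carries a factor from $I$, so that the non-leading part of the determinant lies in $I$. Given that $I$ is an ideal this is immediate, since any product containing an entry $c_{ij}$ stays inside $I$; thus there is no real obstacle beyond making the determinant identity precise.
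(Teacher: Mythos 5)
Your proof is correct. Note that the paper itself offers no proof of this lemma at all: it is introduced as ``a well-known fact that is easy to prove'' and stated without a proof environment, so there is nothing to compare against line by line. Your argument is the standard one. The first inclusion is exactly the routine check (elements of $I$ push $M$ into $IM$, elements of $\Ann(M)$ kill $M$, and $\Ann(M/IM)$ is an ideal, so it contains the sum). For the second inclusion, your Cayley--Hamilton (adjugate) trick is complete and uses finite generation of $M$, which is legitimate since the paper's standing conventions assume all modules are finitely generated; the key expansion $\det(a\,\mathrm{Id}_r - C)\equiv a^r \pmod{I}$ is cleanest if you simply reduce the matrix modulo $I$ before taking determinants, but your term-by-term justification is also fine. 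An alternative, equally standard route to the second inclusion is via supports: finite generation gives $\Supp(M/IM)=\Supp(M)\cap V(I)=V(\Ann(M)+I)$, hence $\sqrt{\Ann(M/IM)}=\sqrt{I+\Ann(M)}$, which is slightly slicker but relies on the correspondence $\Supp(M)=V(\Ann M)$; your determinant argument is more self-contained.
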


We have:

\begin{prop}\label{regb1}
Assume $R$ is   Koszul and  $\dim M/IM=0$. Let $c$ be the smallest
integer such that $\mm^c \subseteq  I+\Ann(M)$ and $I$ is generated
in degree $\leq c$ (such a number $c$ exists by \ref{annih}). Set
$Z_t=Z_t(I,M)$ and $H_t=H_t(I,M)$. Then, for every $t$,
\begin{align*}
& \reg_R (Z_t)\leq t(c+1)+\reg_R(M)\\ 
\intertext{and}\\
&\reg_R (H_t)\leq t(c+1)+\reg_R(M)+c-1.
\end{align*}
\end{prop}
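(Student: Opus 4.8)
The plan is to prove both regularity bounds by simultaneously tracking the Koszul cycle modules $Z_t$ and the homology modules $H_t$ through the short exact sequences that connect consecutive homological degrees. The key structural input is the exact sequence
\begin{equation*}
0 \to Z_t \to K_t(I,M) \to B_{t-1} \to 0,
\end{equation*}
together with
\begin{equation*}
0 \to B_{t-1} \to Z_{t-1} \to H_{t-1} \to 0.
\end{equation*}
Here $K_t(I,M)=\bigwedge^t F\otimes M$ where $F$ is generated in degrees $\leq c$ (since $I$ is generated in degree $\leq c$), so $\reg_R(K_t(I,M))\leq tc+\reg_R(M)$; this uses that over a Koszul ring $R$, tensoring with a free module shifted by $c$ raises regularity by exactly $c$, and that regularity of a direct sum is the max of the summands' regularities.

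First I would set up an induction on $t$. The base case $t=0$ gives $Z_0=K_0(I,M)=M$, whose regularity is $\reg_R(M)$, matching the claimed bound. For the inductive step, I would use the standard fact that in a short exact sequence $0\to A\to B\to C\to 0$ one has $\reg_R(A)\leq \max\{\reg_R(B),\reg_R(C)+1\}$. Applied to the first sequence above, this yields
\begin{equation*}
\reg_R(Z_t)\leq \max\{\reg_R(K_t(I,M)),\ \reg_R(B_{t-1})+1\}.
\end{equation*}
To control $\reg_R(B_{t-1})$ I would feed in the second sequence, which gives $\reg_R(B_{t-1})\leq \max\{\reg_R(Z_{t-1}),\reg_R(H_{t-1})+1\}$. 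Thus the two bounds must be proved together, and the inductive hypothesis supplies $\reg_R(Z_{t-1})$ and $\reg_R(H_{t-1})$.

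The crucial ingredient for the homology bound is that $H_t=H_t(I,M)$ is annihilated by $I+\Ann(M)$, hence by $\mm^c$ since $\mm^c\subseteq I+\Ann(M)$; together with $\dim M/IM=0$ this forces $H_t$ to have finite length. For a finite-length module $N$ over a Koszul algebra one has $\reg_R(N)=\max\{j:N_j\neq 0\}$, and the annihilation by $\mm^c$ restricts the degrees in which the homology can live, which is the mechanism producing the extra $c-1$ term in the second displayed bound. The main obstacle I anticipate is assembling the numerical bookkeeping so that the two inductive bounds remain mutually consistent: one must verify that $\reg_R(H_{t-1})+1\leq \reg_R(Z_t)$'s target and that the $+1$ shifts accumulated from the two short exact sequences combine to give exactly $t(c+1)$ rather than something larger. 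Getting the homology bound to close inductively—rather than drifting upward by an extra unit at each step—will require carefully comparing $\reg_R(B_{t-1})+1$ against both claimed right-hand sides and exploiting that $H_{t-1}$ sits inside $Z_{t-1}$ as a subquotient killed by $\mm^c$, so its top degree is genuinely controlled.
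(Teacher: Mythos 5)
Your plan is essentially the paper's own proof: the same two short exact sequences $0\to Z_t\to K_t(I,M)\to B_{t-1}\to 0$ and $0\to B_{t-1}\to Z_{t-1}\to H_{t-1}\to 0$, the same induction on $t$ with base case $Z_0=M$, and the same key mechanism that $\mm^c$ annihilates $H_t$ while $H_t$ is a quotient of $Z_t$, so that $\reg_R(H_t)\leq t_0^R(Z_t)+c-1\leq \reg_R(Z_t)+c-1$; the numerical bookkeeping closes exactly as you anticipate. One small correction: the claimed equality $\reg_R(N)=\max\{j: N_j\neq 0\}$ for finite-length modules over a Koszul algebra is false in general (take $N=R$ with $R$ itself Artinian and Koszul), but only the inequality $\reg_R(N)\leq \max\{j: N_j\neq 0\}$ is needed, and to bound that top degree you must use that $H_t$ is a genuine quotient of $Z_t$, hence generated in degrees $\leq t_0^R(Z_t)$, not merely a subquotient as you write.
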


\begin{proof}
The proof is a slight generalization of the arguments given in
\cite[Section 2]{BCR}.  Set $B_t=B_t(I,M)$  and  $K_t=K_t(I,R)$.
Note that $I+\Ann(M)$ annihilates $H_t$. Hence $\mm^cH_t=0$. It
follows that $H_t$ vanishes in degrees  $\geq t^R_0(Z_t)+c$ and
hence $\reg_R(H_t)\leq t_0^R(Z_t)+c-1\leq \reg_R(Z_t)+c-1$. So the
second formula follows from the first. The short exact sequence
$$0\to B_t\to Z_t\to H_t\to 0$$
gives $$\reg_R(B_t)\leq \max\{\reg(Z_t), \reg_R(H_t)+1\}\leq
\reg_R(Z_t)+c$$and $$0\to Z_{t+1}\to K_{t+1}\otimes M\to B_t\to 0$$
gives
\begin{align*}
 \reg_R(Z_{t+1})\leq &\max\{ \reg_R(K_{t+1}\otimes M), \reg_R(B_t)+1\} \\
                            \leq&\max\{ (t+1)c+\reg_R(M), \reg_R(Z_t)+c+1\}
\end{align*}
Now the statement can be proved by induction on $t$, the case $t=0$
being obvious since $Z_0=M$.
\end{proof}

We single out a special case of \ref{regb1}:

\begin{prop}\label{regb2}
Assume that $\dim S/I=0$.  Set $Z_t=Z_t(I,M)$ and $H_t=Z_t(I,M)$.
Then, for every $t$,
\begin{align*}
&\reg_S (Z_t)\leq t(\reg_S(I)+1)+\reg_S(M)\\
\intertext{and }\\
&\reg_S (H_t)\leq t(\reg_S(I)+1)+\reg_S(M)+\reg_S(I)-1.
\end{align*}

\end{prop}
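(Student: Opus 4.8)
The plan is to derive Proposition \ref{regb2} as the special case of
Proposition \ref{regb1} obtained by taking $R=S$ to be the polynomial
ring and choosing $c=\reg_S(I)$. First I would observe that when
$\dim S/I=0$ we automatically have $\dim S/(I+\Ann(M))=0$ as well, so
$\dim M/IM=0$ by Lemma \ref{annih}; thus the dimension hypothesis of
\ref{regb1} is satisfied. Since $S$ is a polynomial ring it is in
particular Koszul, and its relative regularity $\reg_S$ coincides with
ordinary Castelnuovo-Mumford regularity, so \ref{regb1} applies
verbatim and the two displayed bounds there specialize to the two
displayed bounds here, provided we can justify the substitution
$c=\reg_S(I)$.

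The crux, then, is to verify that $c:=\reg_S(I)$ is an admissible
choice for the integer required in \ref{regb1}, namely that
$\mm^{c}\subseteq I+\Ann(M)$ and that $I$ is generated in degrees
$\leq c$. The second condition is immediate: the maximal degree of a
minimal generator of $I$ is $t_0^S(I)\leq\reg_S(I)=c$, so $I$ is
generated in degrees $\leq c$. For the first condition I would argue as
follows. Because $\dim S/I=0$, the ideal $I$ is $\mm$-primary, so some
power of $\mm$ lies in $I$; the issue is to pin down which power in
terms of the regularity. The key fact is that for an $\mm$-primary
(equivalently, zero-dimensional) ideal $I$ in a polynomial ring one has
$[S/I]_j=0$ for all $j>\reg_S(I)$, i.e. $\mm^{\reg_S(I)+?}\subseteq I$.
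More precisely, $\reg_S(S/I)=\reg_S(I)-1$ and the regularity of a
zero-dimensional module bounds the top nonvanishing degree of the
module itself, so $[S/I]_j=0$ for $j\geq\reg_S(I)$, which gives
$\mm^{c}\subseteq I\subseteq I+\Ann(M)$ with $c=\reg_S(I)$. Hence $c$
satisfies both requirements, and moreover the \emph{smallest} integer
with these properties is $\leq\reg_S(I)$, so the bounds of \ref{regb1}
hold a fortiori with $c$ replaced by $\reg_S(I)$ (enlarging $c$ only
weakens the inequality, so monotonicity in $c$ must be checked, but the
right-hand sides $t(c+1)+\reg_S(M)$ and $t(c+1)+\reg_S(M)+c-1$ are
visibly nondecreasing in $c$).

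I expect the main obstacle to be precisely this regularity bookkeeping
for zero-dimensional ideals: getting the exact relationship between
$\reg_S(I)$, $\reg_S(S/I)$, and the top socle degree of $S/I$ right, so
that the power of $\mm$ landing inside $I+\Ann(M)$ is genuinely
$\reg_S(I)$ and not $\reg_S(I)+1$. Once that standard fact is invoked,
the rest is a direct citation of \ref{regb1} together with the
observation that its two right-hand sides are monotone in $c$. I would
therefore keep the proof short: state that the hypotheses of \ref{regb1}
hold with $c=\reg_S(I)$ by the generation bound and the
zero-dimensional regularity estimate, note monotonicity in $c$, and
conclude by quoting the two inequalities of \ref{regb1}.
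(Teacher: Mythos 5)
Your proposal is correct and is essentially the paper's own argument: the paper's proof is the one-line observation that the number $c$ of \ref{regb1} satisfies $c\leq \reg_S(I)$, which is exactly what you verify (via $t_0^S(I)\leq\reg_S(I)$ and $\mm^{\reg_S(I)}\subseteq I$ for zero-dimensional $I$), together with the monotonicity of the bounds in $c$ that the paper leaves implicit. Your expanded bookkeeping, including the reduction of $\dim M/IM=0$ to $\dim S/I=0$ via \ref{annih}, fills in precisely the details the paper suppresses.
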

\begin{proof} The number $c$ of $\ref{regb1}$ is $\leq \reg_S(I)$.\end{proof}

The following remark explains why the assumption on the dimension of
$S/I$ is necessary in \ref{regb2}.

\begin{rem}\label{rem1} The module $Z_1(I,M)$ sits in the
exact sequence:
$$0\to Z_1(I,M)\to F\otimes M\to IM\to 0.$$
Hence
$$\reg_S(IM)\leq \max\{ \reg_S(I)+\reg_S(M), \reg_S(Z_1(I,M))-1\}.$$
There are plenty of examples such that
$\reg_S(IM)>\reg_S(I)+\reg_S(M)$ already when $M=I$, see Conca
\cite{C} or Sturmfels \cite{S}. Therefore, in these examples, one
has $\reg_S(Z_1(I,M))>\reg_S(I)+1+\reg_S(M)$.
\end{rem}

But using a result of Caviglia \cite{CA}, see also Eisenbud, Huneke
and Ulrich \cite{EHU}, we are able to show:

\begin{thm}
\label{thm1}  Assume  that $\dim M/IM \leq 1$.
 Assume also that either $\chara K=0$ or $>t$.
Set $Z_t=Z_t(I,M)$. Then
$$\reg_S(Z_t)\leq t(\reg_S(I)+1)+\reg_S(M)$$
for every t.
\end{thm}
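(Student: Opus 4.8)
The plan is to prove the bound by induction on $t$, peeling off one homological degree at each step with the splitting of Lemma \ref{dirsum} and controlling the single remaining degree with Caviglia's tensor-regularity bound; the base case of the induction is essentially Proposition \ref{regb2}. For $t=0$ there is nothing to prove, since $Z_0=M$. For the inductive step I would set $N=Z_{t-1}(I,M)$, so that the inductive hypothesis reads $\reg_S(N)\leq (t-1)(\reg_S(I)+1)+\reg_S(M)$. Because $\chara K=0$ or $>t$, the coefficient $\binom{t}{1}=t$ is invertible in $S$, so Lemma \ref{dirsum} (with $s=1$) shows that $Z_t=Z_t(I,M)$ is a direct summand of $Z_1(I,N)$. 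Since passing to a direct summand does not increase regularity, it suffices to bound $\reg_S(Z_1(I,N))$.

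To estimate $\reg_S(Z_1(I,N))$ I would combine the two short exact sequences
$$0\to Z_1(I,N)\to F\otimes N\to IN\to 0\quad\text{and}\quad 0\to IN\to N\to N/IN\to 0,$$
the first being the sequence of Remark \ref{rem1} applied to $N$. The first sequence gives $\reg_S(Z_1(I,N))\leq\max\{\reg_S(F\otimes N),\reg_S(IN)+1\}$, and since $F$ is a direct sum of copies of $S$ shifted by the generator degrees of $I$ (all of which are $\leq\reg_S(I)$) one has $\reg_S(F\otimes N)\leq\reg_S(I)+\reg_S(N)$. The second sequence gives $\reg_S(IN)\leq\max\{\reg_S(N),\reg_S(N/IN)+1\}$, so the whole argument comes down to bounding $\reg_S(N/IN)=\reg_S(N\otimes_S S/I)$.

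This is exactly where the dimension hypothesis and Caviglia's theorem enter. Because $N=Z_{t-1}(I,M)$ is a submodule of $\bigwedge^{t-1}F\otimes M$, its support is contained in $\Supp(M)$, whence $\Supp(N\otimes_S S/I)\subseteq\Supp(M)\cap V(I)=\Supp(M/IM)$ and therefore $\dim(N\otimes_S S/I)\leq\dim(M/IM)\leq 1$. Caviglia's bound \cite{CA} on the regularity of a tensor product of dimension $\leq 1$ then yields $\reg_S(N\otimes_S S/I)\leq\reg_S(N)+\reg_S(S/I)=\reg_S(N)+\reg_S(I)-1$. Chaining the inequalities gives $\reg_S(IN)\leq\reg_S(I)+\reg_S(N)$ and hence $\reg_S(Z_1(I,N))\leq\reg_S(I)+1+\reg_S(N)$; feeding in the inductive hypothesis for $N=Z_{t-1}(I,M)$ produces precisely $t(\reg_S(I)+1)+\reg_S(M)$.

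I expect the only real difficulty to be the invocation of Caviglia's theorem, together with the observation that makes it applicable: the dimension constraint on $M/IM$ must propagate to $N/IN$ for every intermediate cycle module $N=Z_{t-1}(I,M)$, and it is this propagation that licenses the tensor-product bound $\reg_S(N\otimes_S S/I)\leq\reg_S(N)+\reg_S(S/I)$, which otherwise fails without a dimension restriction (cf. Remark \ref{rem1}). By contrast, the characteristic hypothesis plays only the auxiliary role of guaranteeing that the splitting of Lemma \ref{dirsum} is available at each of the steps $1,\dots,t$ of the induction.
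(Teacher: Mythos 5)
Your proof is correct and takes essentially the same route as the paper's: induction on $t$, the splitting of Lemma \ref{dirsum}, Caviglia's tensor-regularity bound for the rank-one step, and propagation of the hypothesis $\dim M/IM\leq 1$ to the cycle modules (via supports/annihilators, exactly as the paper does with $\Ann(Z_1)\supseteq\Ann(M)$ and Lemma \ref{annih}). The only cosmetic difference is that you realize $Z_t$ as a summand of $Z_1(I,Z_{t-1})$ and therefore invoke Caviglia at every inductive step, whereas the paper realizes $Z_t$ as a summand of $Z_{t-1}(I,Z_1)$, so that Caviglia is needed only in the base case $t=1$ and the inductive hypothesis is applied to the module $Z_1$.
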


\begin{proof} By induction on $t$. For $t=1$ note that, by \cite{CA},
we have $\reg_S(IM)\leq \reg_S(I)+\reg_S(M)$ and the short exact
sequence of \ref{rem1} implies that $\reg_S(Z_1)\leq
\reg_S(I)+1+\reg_S(M)$. For $t>1$, by virtue of \ref{dirsum} we have
that $Z_t$ is a direct summand of $Z_{t-1}(I,Z_1)$. Hence
$\reg_S(Z_t)\leq \reg_S(Z_{t-1}(I,Z_1))$. Since $\Ann(Z_1)\supseteq
\Ann(M)$ we have $\Ann(Z_1)+I\supseteq \Ann(M)+I$ and, by \ref{annih},
$\dim Z_1/IZ_1\leq M/IM\leq 1$. Hence, by induction, we have
$$\reg_S(Z_{t-1}(I,Z_1))\leq (t-1)(\reg_S(I)+1)+\reg_S(Z_1).$$ Since $\reg_S(Z_1)\leq
\reg_S(I)+1+\reg_S(M)$ has been already established, the desired
inequality follows.
\end{proof}

\begin{quest}
\label{q1}
\begin{itemize}
\item[(1)] Does the inequality in \ref{thm1} hold over a Koszul algebra $R$?
And is the assumption on the characteristic needed?
\item[(2)] Is it true that $\reg_S(Z_t(I,S))\leq t(\reg_S(I)+1)$
holds for every homogeneous ideal $I\subset S$?
\end{itemize}
\end{quest}

Since $Z_1(I,S)$ is the first syzygy module of  $I$ the inequality
of   \ref{q1}  is actually an equality for $t=1$. An indication that
the  answer to \ref{q1}(2) might be ``yes'' for some classes of
ideals is given in \ref{piper} and \ref{sato}. Recall that a
monomial ideal $I\subset S=K[x_1,\dots,x_n]$ is \emph{strongly
stable}  if whenever a monomial $m\in I$ is divisible by a variable
$x_i$, then $mx_j/x_i\in I$ for every $j<i$.   In characteristic $0$
the strongly stable ideals are exactly the ideals of $S$ which are
fixed by the Borel group of the upper triangular matrices of
$\GL_n(K)$ acting on $S$.  The Eliahou-Kervaire complex  \cite{EK}
gives the graded  minimal free resolution of strongly stable ideals.
For us it is important to recall that if $I$ is strongly stable then
$\reg_S(I)$ is the largest degree of a minimal generator of $I$.

\begin{prop}
\label{piper} Let $I\subset S$ be a strongly stable ideal. Set
$Z_t=Z_t(I,S)$. Then $Z_t$ is generated by elements of degree $\leq
t(\reg_S(I)+1)$.
\end{prop}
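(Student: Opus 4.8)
The plan is to pass to the fine $\Z^n$-grading and read off the Koszul cycles from a family of simplicial complexes. Fix the minimal monomial generators $u_1,\dots,u_m$ of $I$, so that $d:=\max_j\deg u_j=\reg_S(I)$, and form $K(\ub,S)$ on them. For a monomial $w=x^\ab$ I would identify the internal‑degree‑$w$ strand of $K(\ub,S)$ with the reduced (augmented) simplicial chain complex $\widetilde C_\bullet(\Delta_w)$ of
$$\Delta_w:=\{J\subseteq[m]:\textstyle\prod_{j\in J}u_j\mid w\}.$$
This is a simplicial complex because $\prod_{j\in J'}u_j\mid\prod_{j\in J}u_j$ for $J'\subseteq J$, its vertices are the $j$ with $u_j\mid w$, and the basis element $e_J$ in degree $w$ corresponds to the face $J$ via $e_J\leftrightarrow (w/\prod_{j\in J}u_j)\,e_J$; under this dictionary the Koszul differential becomes exactly the simplicial boundary. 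Consequently $(Z_t)_w\cong\widetilde Z_{t-1}(\Delta_w)$, the group of reduced $(t-1)$-cycles. (For $t=1$ one checks directly $\partial\big(\sum_j c_j(w/u_j)e_j\big)=(\sum_j c_j)\,w$, so $(Z_1)_w$ is the space of reduced $0$-cycles, as it must be.)

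The point of this translation is that for a variable $x_p\mid w$ the inclusion $\Delta_{w/x_p}\subseteq\Delta_w$ induces, on cycles, precisely multiplication by $x_p$, i.e. the map $(Z_t)_{w/x_p}\to(Z_t)_w$. Hence
$$\big(\mm Z_t\big)_w=\sum_{p\in\supp w}\mathrm{Im}\big(\widetilde Z_{t-1}(\Delta_{w/x_p})\to\widetilde Z_{t-1}(\Delta_w)\big),$$
and the assertion ``$Z_t$ is generated in degrees $\le t(d+1)$'' becomes the purely combinatorial statement that, for every $w$ with $\deg w>t(d+1)$, the natural map
$$\bigoplus_{p\in\supp w}\widetilde Z_{t-1}(\Delta_{w/x_p})\longrightarrow\widetilde Z_{t-1}(\Delta_w)$$
is surjective. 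Equivalently (the ``common‑variable'' form I would actually work with): every $(t-1)$-cycle $z$ of $\Delta_w$ must be written as $z=\sum_p z_p$ where each term of $z_p$ has coefficient divisible by $x_p$. A first reduction is cheap: since any face $J$ with $|J|\le t$ satisfies $\deg\prod_{j\in J}u_j\le td<\deg w$, there is slack in some variable, so $\bigcup_p\Delta_{w/x_p}$ already contains the whole $(t-1)$-skeleton of $\Delta_w$; thus the problem is genuinely one of \emph{splitting} a cycle across the members of this cover, not of covering its support.

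I would carry this out by induction on $t$, the base case $t=1$ being the known fact that the first syzygy module of a strongly stable ideal is generated in degree $\le d+1$ (the Eliahou–Kervaire linear syzygies), which combinatorially says that $\Delta_w$ becomes connected once $\deg w>d+1$. For the inductive step the natural tool is the Mayer–Vietoris/\v{C}ech machinery attached to the cover $\{\Delta_{w/x_p}\}_{p\in\supp w}$, whose multiple intersections are again of the same shape, $\bigcap_{p\in T}\Delta_{w/x_p}=\Delta_{w/x_T}$ with $x_T=\prod_{p\in T}x_p$. A spectral‑sequence/diagram chase reduces the surjectivity onto $(t-1)$-cycles to controlled vanishing of reduced homologies $\widetilde H_{t-1-|T|}(\Delta_{w/x_T})=H_{t-|T|}(I,S)_{w/x_T}$ of these intersections; and here the hypothesis $\deg w>t(d+1)$ is exactly calibrated, since $\deg(w/x_T)=\deg w-|T|>(t-|T|)(d+1)$, so after removing $|T|$ variables we remain above the analogous threshold for the relevant lower homological degree.

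The hard part, and the place where strong stability is indispensable, will be establishing that combinatorial decomposition (equivalently, the required vanishing of intersection homology in the coupled range $\widetilde H_{t-1-|T|}(\Delta_{w/x_T})$, $|T|\ge 2$). For a general monomial ideal it is simply false: for the complete intersection $(x_1x_2,x_3x_4)$ the only first syzygy is the Koszul relation $x_3x_4\,e_1-x_1x_2\,e_2$ of degree $4>3=d+1$, so $Z_1$ already has a generator above the bound — and indeed this ideal is not strongly stable. Thus the argument must feed in the stable exchange property $x_i(u/x_{\max u})\in I$ to produce, in each intersection $\Delta_{w/x_T}$ of large enough degree, enough ``room'' to contract the relevant cycles — for instance by exhibiting an apex/cone vertex governed by the Borel order, or by a shelling/discrete‑Morse matching of $\Delta_{w'}$ compatible with that order, rather than the naive cone (which the examples show is not always available). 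Making this acyclicity uniform over all $w$ and all $T$, with the numerics matching $t(d+1)$ precisely, is the main obstacle; once it is in place, the Mayer–Vietoris splitting together with the induction on $t$ closes the proof.
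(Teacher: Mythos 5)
Your dictionary between the $\Z^n$-graded strands of the Koszul complex and the complexes $\Delta_w$, and the reduction of the proposition to surjectivity of $\bigoplus_{p\in\supp w}\widetilde Z_{t-1}(\Delta_{w/x_p})\to \widetilde Z_{t-1}(\Delta_w)$ whenever $\deg w>t(d+1)$, are both correct, and the outer skeleton (fine grading, induction on $t$, Eliahou--Kervaire base case) matches the paper's. But the proof stops exactly where the mathematics begins: you yourself flag the splitting/acyclicity step --- the only place where strong stability enters --- as ``the main obstacle'' and leave it unproved, offering only candidate techniques (a cone vertex, a shelling, a discrete Morse matching). Since that step is the entire content of the proposition, what you have is a correct reformulation plus a plan, not a proof.

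Moreover, the plan's mechanism is unsound as stated. The Mayer--Vietoris/\v{C}ech chase for splitting a $(t-1)$-cycle across the cover $\{\Delta_{w/x_p}\}$ needs $\widetilde H_{t-|T|}(\Delta_{w/x_T})=0$ for all $|T|\geq 2$ (your index $t-1-|T|$ is off by one), and vanishing of these groups in any range of degrees is simply false when $I$ is not $\mm$-primary. At the bottom of the staircase the requirement reads $\widetilde H_{-1}(\Delta_{w/x_T})=(S/I)_{w/x_T}=0$, i.e.\ $w/x_T\in I$, which fails for monomials of arbitrarily large degree as soon as $\dim S/I>0$; one step up it reads connectivity of $\Delta_{w/x_T}$, which also fails in arbitrarily high degree: for the strongly stable ideal $I=(x_1,x_2)^2\subset K[x_1,x_2,x_3]$ the complex $\Delta_{x_1x_2^2x_3^M}$ consists of the two isolated vertices $x_1x_2$ and $x_2^2$ for every $M$. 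The same example shows that your rephrasing of the base case --- ``$\Delta_w$ becomes connected once $\deg w>d+1$'' --- is wrong: generation of $Z_1$ in degrees $\leq d+1$ says that $0$-cycles split across the cover (here $x_2x_3^Me_2-x_1x_3^Me_3=x_3^M(x_2e_2-x_1e_3)$), not that $\widetilde H_0(\Delta_w)$ vanishes. So your argument would have to track the particular obstruction classes produced by the chase rather than annihilate whole homology groups, and making those classes die is precisely the missing strong-stability input. The paper avoids homology vanishing altogether: working modulo $Z_1Z_{t-1}$, it decomposes $f=a+[u]\,b$ along a generator $u$ divisible by $x_n$, expands $b$ using the inductive generation of $Z_{t-1}$, and uses the exchange property $x_iu/x_n\in I$ to subtract explicit elements of $Z_1Z_{t-1}$ from $f$ until every term is divisible by $x_n$, closing with a double induction on $n$ and $t$; those explicit corrections are exactly the cycle-level data that a group-vanishing reduction forgets.
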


\begin{proof}
Set $c=\reg_S(I)$. The idea of the proof follows essentially the
argument given in \cite[Theorem 3.3]{BCR}. We note first that, as we
are dealing with a monomial ideal $I$, the modules $Z_t$ have a
natural $\Z^n$-graded structure as long as we consider the free
presentation $F\to I$ associated with the monomial generators of
$I$. We do a double induction on $n$ and on $t$. The case $n=1$ is
obvious. The case $t=1$ is easy and follows from the description of
the (first) syzygies of $I$ given in \cite{EK}. By induction on $t$
it is enough to verify that $Z_t /Z_1 Z_{t-1}$ is generated in
degree $<t(c+1)$. Hence it suffices to show that every $\Z^n$-graded
element $f\in Z_t$ of total degree $q\geq t(c+1)$ can be written
modulo $Z_1Z_{t-1}$ as a multiple of an element in $Z_t$ of total
degree $<q$. Let $\alpha\in \Z^n$ be the $\Z^n$-degree of $f$. If
$\alpha_n=0$ then we can conclude by induction on $n$. Therefore we
may assume that $\alpha_n>0$. Let $u\in I$ be a monomial generator
of $I$ with $x_n\mid u$ and $[u]$ the corresponding free generator
of $F$. We have the decomposition
$$f=a+[u] b$$
with $b\in Z_{t-1}$ and $[u]$ does not appear in $a$. Note that $b$
has degree $q-\deg(u)\geq q-c$. Since $Z_{t-1}$ is generated by
elements of degree $\leq (t-1)(c+1)$ we may write
\begin{equation}
\label{eq:1} b=\sum_{j=1}^s \lambda_j v_j z_j
\end{equation}
where $\lambda_j\in K$, $z_j\in Z_{t-1}$ are $\Z^n$ graded and the
$v_j$ are monomials of positive degree.

Let $\lambda_j v_j z_j$ be a summand in (\ref{eq:1}). If $x_n$ does
not divide $v_j$, then choose $i<n$ such that $x_i\mid v_j$. Since
$x_iu/x_n\in I$, there exists a monomial generator of $I$, say
$u_1$, such that $u_1\mid x_iu/x_n$, say $u_1w=x_iu/x_n$. Set
$z'=x_i[u]-x_nw[u_1]\in Z_1$ and subtract the element
$$
\lambda_j \frac{v_j}{x_i} z'z_j \in Z_{t-1}Z_1
$$
from $f$. Repeating this procedure for each $\lambda_j v_j z_j$ in
(\ref{eq:1}) such that $x_n$ does not divide $v_j$ we obtain a cycle
$f_1\in Z_t$ of degree $\alpha$ such that
\begin{itemize}
\item[(i)] $f\equiv f_1 \mod Z_1Z_{t-1}$;
\item[(ii)] if $v [u_1,\dots,u_t]$ appears in $f_1$
and $u\in \{ u_1,\dots,u_t\}$, then $x_n\mid v$.
\end{itemize}
We repeat the described procedure for each monomial generator $u\in
I$ with $x_n\mid u$. We end up with an element $f_2\in Z_t$ of
degree $\alpha$ such that
\begin{itemize}
\item[(iii)] $f\equiv f_2 \mod Z_1Z_{t-1}$;
\item[(iv)] if  $v [u_1,\dots,u_t]$ appears in $f_2$
and $x_n\mid u_1\cdots u_t$, then $x_n\mid v$.
\end{itemize}
Note that if $v [u_1, \dots, u_t]$ appears in $f_2$ and $x_n\nmid
u_1\cdots u_t$, then $x_n\mid v$ by degree reasons. Hence for every
$v [u_1, \dots, u_t]$ appearing in $f_2$ we have $x_n\mid v$.
Therefore $f_2=x_ng$, and $g \in Z_t$ has degree $<q$. This
completes the proof.
\end{proof}

 Indeed a much stronger statement holds:

 \begin{thm}
 \label{sato}
 Let $I,J$ be strongly stable ideals of $S$. Then
 $$\reg_S(Z_t(I,S/J))\leq t(\reg_S(I)+1)+\reg_S(S/J)$$
 for every $t$.
\end{thm}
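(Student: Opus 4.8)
The desired bound has exactly the form of \ref{regb2} and \ref{thm1}, so the plan is to keep the homological recursion used in the proof of \ref{regb1} and to induct on $t$, the point being to replace the zero-dimensional input of \ref{regb1} by the strongly stable hypothesis (and thereby to avoid \ref{dirsum}, which is why no hypothesis on the characteristic is needed). Write $c=\reg_S(I)$, $Z_t=Z_t(I,S/J)$, $B_t=B_t(I,S/J)$, $H_t=H_t(I,S/J)$. The two short exact sequences
\begin{align*}
0\to Z_{t+1}\to K_{t+1}(I,S)\otimes(S/J)\to B_t\to 0 \qquad\text{and}\qquad 0\to B_t\to Z_t\to H_t\to 0
\end{align*}
yield $\reg_S(Z_{t+1})\leq\max\{(t+1)c+\reg_S(S/J),\,\reg_S(B_t)+1\}$ and $\reg_S(B_t)\leq\max\{\reg_S(Z_t),\,\reg_S(H_t)+1\}$, since $K_{t+1}(I,S)$ is a sum of copies of $S$ shifted by sums of $t+1$ generator degrees of $I$, each $\leq c$. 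A direct computation shows that the induction closes as soon as one proves the homology estimate
\begin{align*}
\reg_S\bigl(H_t(I,S/J)\bigr)\leq t(c+1)+\reg_S(S/J)+c-1 .
\end{align*}
The case $t=0$ holds because $H_0(I,S/J)=S/(I+J)$ and $I+J$ is again strongly stable, so $\reg_S(S/(I+J))=\reg_S(I+J)-1\leq\max\{c,\reg_S(S/J)+1\}-1$.

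The reason \ref{regb1} cannot simply be quoted is that $\dim S/(I+J)$ need not vanish, so $H_t$ carries no power of $\mm$ as annihilator and the estimate $\reg_S(H_t)\leq\reg_S(Z_t)+c-1$ is lost. To recover the homology bound from the strongly stable structure I would work throughout in the $\Z^n$-graded category (available since $I$ and $J$ are monomial) and add a second induction on the number of variables $n$, driven by the standard sequence
\begin{align*}
0\to\bigl(S/(L:x_n)\bigr)(-1)\xrightarrow{\,x_n\,}S/L\to S/(L,x_n)\to 0
\end{align*}
for $L=I+J$ and for $L=J$. Here $L:x_n$ is again strongly stable, while $S/(L,x_n)$ lives in $K[x_1,\dots,x_{n-1}]$; tensoring these sequences with the Koszul complex $K(I,S)$ and reading off the long exact homology sequence should express $\reg_S(H_t(I,S/J))$ through Koszul homology in $n-1$ variables and through a colon module, both covered by the inductive hypotheses.

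It is profitable to isolate the case $J=0$ and treat it first, because it is the characteristic-free core of the theorem and confirms \ref{q1}(2) for strongly stable ideals. There the target reduces to $\reg_S(Z_t(I,S))\leq t(c+1)$, and since \ref{piper} already gives $t_0^S(Z_t(I,S))\leq t(c+1)$, it is enough to show that $Z_t(I,S)$ is \emph{componentwise linear}, whence its regularity equals its top generating degree. I expect the two genuine obstacles to be exactly these: proving that $Z_t(I,S)$ is componentwise linear, to be extracted from the stable $\Z^n$-graded structure of the cycles by sharpening the reduction steps of \ref{piper}; and, for general $J$, controlling how the extra summand $\reg_S(S/J)$ propagates through the induction on $n$, since the long exact homology sequence must be kept compatible with the regularity bookkeeping at every stage.
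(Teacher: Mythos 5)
The paper contains no proof of \ref{sato} for you to match: it is stated as a result of Satoshi Murai and the second author, ``part of an ongoing project'', so the only question is whether your proposal stands on its own. It does not; it is a correct reduction followed by a research plan. The reduction is indeed sound: with $c=\reg_S(I)$ (the top generator degree, by Eliahou--Kervaire), the two sequences $0\to Z_{t+1}\to K_{t+1}\otimes S/J\to B_t\to 0$ and $0\to B_t\to Z_t\to H_t\to 0$ close the induction exactly as in \ref{regb1} once one knows $\reg_S(H_t(I,S/J))\leq t(c+1)+\reg_S(S/J)+c-1$, and your $t=0$ computation via the strong stability of $I+J$ is correct. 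But that homology estimate (equivalently, for $J=0$, the componentwise linearity of $Z_t(I,S)$) carries the entire content of the theorem, and you leave it unproven, explicitly labeling it an ``expected obstacle''. What makes \ref{regb1} work is that $\mm^c H_t=0$ in the zero-dimensional case, so the homology bound comes for free from the cycle bound; once that is gone you have no substitute, only the hope of one.

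Moreover, the sketched substitutes would not go through as written. The long exact Koszul homology sequence obtained from $0\to \bigl(S/(J:x_n)\bigr)(-1)\to S/J\to S/(J,x_n)\to 0$ yields three-term exact sequences $A\to B\to C$, and the regularity of a middle term is \emph{not} bounded by $\max\{\reg_S A,\reg_S C\}$; one must control kernels and images of the connecting maps, which is precisely the ``bookkeeping'' you defer. Likewise, componentwise linearity of $Z_t(I,S)$ does not follow from \ref{piper}: that proposition bounds only $t_0^S(Z_t)$, the degrees of minimal generators, and says nothing about the higher syzygies of the cycle modules; no technique in the paper (nor any sharpening of the rewriting argument of \ref{piper} that you indicate) yields their linearity. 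So the proposal correctly identifies where the difficulty lies, but it proves nothing beyond what \ref{regb1} and \ref{piper} already give.
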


Theorem \ref{sato} has been proved by Satoshi Murai in collaboration
with the second author and is part of an ongoing project.

The following result, whose proof is surprisingly simple,
generalizes Green's theorem \cite[Theorem 2.16]{GR2}:

\begin{thm}\label{greeny}
Let $I\subset S$  such that  $\dim M/IM=0$. Let $c\in \N$ be such
that $I$  is generated in degrees $\leq c$ and set $v=\dim [S/I]_c$.
Set $Z_t=Z_t(I,M)$ and $H_t=H_t(I,M)$. One has
\begin{align*}
&\reg_S(Z_t)\leq t(c+1)+\reg_S(M)+v \\
\intertext{and}  \\
&\reg_S(H_t)\leq t(c+1)+\reg_S(M)+v+c-1
\end{align*}
for every $t$.
\end{thm}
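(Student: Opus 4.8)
The plan is to isolate the second inequality as the real content and to derive the first from it by the recursion already used in \ref{regb1}; the new ingredient, where $v$ enters, is a top-degree estimate for $H_t$ supplied by Green's theorem.

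First I would set up, with $S$ in place of the Koszul ring $R$, the two short exact sequences
\[
0\to B_t\to Z_t\to H_t\to 0,\qquad 0\to Z_{t+1}\to K_{t+1}\otimes M\to B_t\to 0,
\]
where $B_t=B_t(I,M)$ and $K_{t+1}=K_{t+1}(I,S)$. As $K_{t+1}\otimes M$ is a direct sum of copies of $M$ shifted by at most $(t+1)c$, they yield
\[
\reg_S(Z_{t+1})\le\max\{(t+1)c+\reg_S(M),\ \reg_S(Z_t)+1,\ \reg_S(H_t)+2\}.
\]
Running this recursion by induction on $t$, one checks that the asserted bound $\reg_S(H_t)\le t(c+1)+\reg_S(M)+v+c-1$ for all $t$ already forces $\reg_S(Z_t)\le t(c+1)+\reg_S(M)+v$ for all $t$: each of the three terms is at most $(t+1)(c+1)+\reg_S(M)+v$, the last one meeting it with equality. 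In particular $v$ is picked up only once along the induction, so the whole theorem comes down to the $H_t$-bound (the case $t=0$ being $H_0=M/IM$).

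To attack the $H_t$-bound I would first note that $\dim M/IM=0$ together with \ref{annih} gives $\dim S/(I+\Ann M)=0$; hence every $H_t$ is annihilated by an $\mm$-primary ideal, is of finite length, and $\reg_S(H_t)$ is simply its top nonvanishing degree. Since $I$ annihilates $H_t$, and since $IZ_t\subseteq B_t$, the module $H_t$ is a quotient of $H_0(I,Z_t)=Z_t/IZ_t$; and because $\Ann(Z_t)\supseteq\Ann(M)$ we again get $\dim Z_t/IZ_t=0$ from \ref{annih}. The case $t=0$ of the theorem is a module form of Green's theorem \cite[Theorem 2.16]{GR2}, which for an ideal generated in degrees $\le c$ with $\dim_K[S/I]_c=v$ bounds the top degree of such a zero-dimensional quotient by the regularity of the module plus $c+v-1$. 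Applying it to $Z_t$ in place of $M$ gives at once $\reg_S(H_t)\le\reg_S(Z_t)+c+v-1$.

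The main obstacle is that this estimate is off by exactly $v$: what the recursion needs is the sharp bound $\reg_S(H_t)\le\reg_S(Z_t)+c-1$, and the surplus $v$ in the estimate above would accumulate at every step and destroy the single-$v$ form of the result. The genuine content of the theorem, and the point where it generalizes Green rather than merely repeating \ref{regb1}, is that the Koszul boundaries $B_t$ strictly contain $IZ_t$ and that these extra boundaries absorb precisely this surplus $v$. Equivalently, one may pass to $I'=I+\mm^c$: since $\dim_K[S/I]_c=v$, the ideal $I'$ is obtained from $I$ by adjoining $v$ generators of degree $c$, and $\mm^c\subseteq I'$ puts $I'$ in the range of \ref{regb1}, so that $\reg_S(Z_t(I',M))\le t(c+1)+\reg_S(M)$; the task is then to show that deleting each of these $v$ generators raises $\reg_S(Z_t)$ by at most $1$. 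Establishing this sharp vanishing — the replacement for the step ``$\mm^cH_t=0$'' of \ref{regb1}, which is unavailable here because $\mm^c$ no longer annihilates $H_t$ — is the step I expect to be the crux.
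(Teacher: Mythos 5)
Your reduction of the first inequality to the second is correct and agrees with the paper (the recursion from the proof of \ref{regb1}, fed by the two short exact sequences, does turn the $H_t$-bound into the $Z_t$-bound with only a single $+v$). But the second inequality --- which you yourself isolate as ``the real content'' --- is not actually proved in your proposal. Your route via a module form of Green's theorem applied to $Z_t$ only yields $\reg_S(H_t)\leq \reg_S(Z_t)+c+v-1$, which, as you observe, loses an extra $v$ at every homological step; and your proposed repair (pass to $I'=I+\mm^c$ and show that deleting each of the $v$ degree-$c$ generators costs at most $1$ in regularity) is stated but never carried out: you explicitly flag it as ``the step I expect to be the crux.'' So there is a genuine gap exactly at the theorem's core.

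The missing idea is the long exact sequence of Koszul homology for adding a single element \cite[1.6.13]{BH}, used in an induction on $v$. For $v>0$ pick $f\in S_c\setminus I$ and put $J=I+(f)$, so that $\dim [S/J]_c=v-1$ and the generators of $I$ remain generators of $J$; then one has the exact sequence
\begin{equation*}
H_{t+1}(J,M)\to H_t(I,M)(-c)\to H_t(I,M).
\end{equation*}
Since $\dim H_t(I,M)=0$ (by \ref{annih}), $\reg_S(H_t(I,M))$ is its top nonvanishing degree $D$; in degree $D+c$ the right-hand map has nonzero source and zero target, so exactness forces $H_{t+1}(J,M)_{D+c}\neq 0$, i.e.\ $\reg_S(H_{t+1}(J,M))\geq D+c$. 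Applying the induction hypothesis for $J$ (which has $v-1$) in homological degree $t+1$ then gives $D\leq t(c+1)+\reg_S(M)+v+c-1$, with base case $v=0$ supplied by \ref{regb1} as in your plan. Note what this reveals about your proposed repair: one never shows that removing a generator raises $\reg_S(H_t)$ or $\reg_S(Z_t)$ by at most $1$ at \emph{fixed} $t$; instead each removed generator is traded for one step up in homological degree, and it is precisely this exchange of $v\mapsto v-1$ against $t\mapsto t+1$ that prevents the $v$'s from accumulating --- the obstruction your Green's-theorem estimate could not get around.
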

\begin{proof} The first inequality can be deduced from the second using
the standard short exact sequences relating $B_t, Z_t$ and $H_t$.
We prove the second inequality by induction on $v$. If $v=0$ then
$\mm^c\subset I$ and the assertion has been proved in \ref{regb1}.
Now let $v>0$. Observe that $H_t$ is annihilated by $I+\Ann(M)$.
Hence (by \ref{annih})  $\dim H_t=0$ and its regularity is the
largest degree  in which $H_t$ does not vanish. Take $f\in
S_c\setminus I$ and set $J=I+(f)$. Note that the minimal generators
of $I$ are minimal generators of $J$ and that $\dim [S/J]_c=v-1$. We
have a short exact sequence of Koszul homology \cite[1.6.13]{BH}:
$$H_{t+1}(J,M)\to H_{t}(-c)\to  H_{t}.$$
By construction $H_{t}(-c)$ does not vanish in degree
$\reg_S(H_t)+c$ while  $H_t$  vanishes in that degree. It follows
that $H_{t+1}(J,M)$ does not vanish in degree $\reg_S(H_t)+c$ and
hence $\reg_S(H_{t+1}(J,M))\geq \reg_S(H_t)+c$.  By induction we
know that $\reg_S(H_{t+1}(J,M))\leq
(t+1)(c+1)+\reg_S(M)+(v-1)+c-1$. It follows that
$$\reg_S(H_t)+c\leq  (t+1)(c+1)+\reg_S(M)+(v-1)+c-1, $$
that is, \begin{equation*} \reg_S(H_t)\leq
t(c+1)+\reg_S(M)+v+c-1.\qedhere\end{equation*}
\end{proof}

\begin{rem} (a) Let $I\subset S$ be the ideal generated by a proper
subspace $V$ of forms of degree $c$  such that $I\mm=\mm^{c+1}$. Then
$\reg_S(I)=c+1$. Set $Z_t=Z_t(I,S)$ and $v=\dim S_c/V$. By virtue of
\ref{regb2} we have $\reg_S(Z_t)\leq t(c+2)$ while \ref{greeny}
gives $\reg_S(Z_t)\leq t(c+1)+v$.  So for small $t$ the first bound
is better than the second and the other way round for large $t$.

(b)   Since $H_0=M/IM$, for $t=0$ the bound of \ref{greeny} takes
the form $\reg_S(M/IM)\leq \reg_S(M)+v+c-1$.  Even the case $M=S$ is
interesting: it says that if $\sqrt{I}=\mm$, $I$ is generated in
degree $\leq c$ and $v=\dim [S/I]_c$ then $\mm^{c+v}\subset I$.
\end{rem}

\section{Green-Lazarsfeld index for Segre-Veronese rings}
\label{SeVe}

The goal of this section is to prove a  result  \ref{check}    about
the Green-Lazarsfeld index of Segre-Veronese rings which was
announced in \cite{BCR}.  We first need to generalize some results
of \cite{BCR} to the multihomogeneous setting.

Let $d\in \N$ and $m=(m_1,\dots,m_d)\in \N^d$ and
$\fc=(c_1,\dots,c_d)\in \N^d$. We consider the polynomial ring
$S=K[x_{ij} : 1\leq i\leq d, \ \ 1\leq j\leq m_i]$ with the $\Z^d$
graded structure induced by assigning $\deg x_{ij}=e_i\in \Z^d$.
Consider the ideals $\mm_i=(x_{ij} | j=1,\dots,m_i)$ and
$$\mm^{\fc}=\prod_{i=1}^d \mm_i^{c_i}.$$
Then the module of Koszul cycles $Z_t(\mm^{\fc},S)$ has a
$\Z^d$-graded structure and also a finer $\Z^m=\Z^{m_1}\times \cdots
\Z^{m_d}$-graded structure. We have:

\begin{lem}
\label{multi} The module $Z_t(\mm^{\fc},S)$ is generated by elements
that either have $\Z^d$-degree bounded above by the vector
$t\fc+(t-1)\sum e_i$ or belong to $U_i^t$ for some $i$ in
$\{1,\dots,d\}$ where $U_i=Z_1(\mm^{\fc},S)_{\fc+e_i}$.
\end{lem}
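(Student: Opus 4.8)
The plan is to mimic the argument of Proposition \ref{piper}, but now carried out in the finer $\Z^m$-graded setting and splitting off the variables of one block at a time. Since $\mm^{\fc}$ is a monomial ideal, the cycle modules $Z_t = Z_t(\mm^{\fc},S)$ inherit a $\Z^m$-grading once we fix the free presentation $F\to\mm^{\fc}$ associated to the monomial generators of $\mm^{\fc}$ (these are the squarefree-times-power products $\prod_i w_i$ with $w_i$ a monomial of degree $c_i$ in the $x_{i\bullet}$). The claim is an assertion about generators, so I would proceed by induction and show that every $\Z^m$-graded cycle $f\in Z_t$ whose $\Z^d$-degree $\deg f$ is \emph{not} bounded above by $t\fc+(t-1)\sum e_i$ can be reduced, modulo $Z_1 Z_{t-1}$, either to cycles of strictly smaller internal degree or to elements of some $U_i^t$. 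The outer induction should be on $t$ (so that $Z_{t-1}$ is already known to be generated in the expected degrees), and an inner induction on the total number of variables, exactly as in \ref{piper}.

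The key structural step is to locate a block $i$ in which the degree ``overflows'': if $\deg f$ fails to be bounded by $t\fc+(t-1)\sum e_i$, then there is an $i$ with $(\deg f)_i \geq tc_i + t$ (strictly more than the $i$-th coordinate of that bound). Fixing such an $i$, I would isolate a single variable $x_{ij}$ in that block with positive exponent in $\deg f$ and write, for each monomial generator $u$ of $\mm^{\fc}$ divisible by $x_{ij}$, the decomposition $f=a+[u]\,b$ with $b\in Z_{t-1}$ and $[u]$ not appearing in $a$, as in (\ref{deco}). Using the strong-stability-type exchange available for powers of $\mm_i$ — namely that $x_{ik}u/x_{ij}\in\mm^{\fc}$ for any other index $k$ in block $i$ — I can produce $1$-cycles $z' = x_{ik}[u]-x_{ij}w[u_1]\in Z_1$ and subtract suitable multiples $\lambda\,(v/x_{ik})z'z_j\in Z_1 Z_{t-1}$ to sweep the factor $x_{ij}$ into the coefficient monomials. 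This is precisely the move of \ref{piper} applied inside block $i$.

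After this sweeping, I would argue that for every term $v[u_1,\dots,u_t]$ surviving in the reduced cycle $f_2$, the variable $x_{ij}$ divides the coefficient $v$: the terms where $x_{ij}$ divides one of the generators $u_\ell$ have been handled directly, and for the remaining terms $x_{ij}\mid v$ follows by the degree overflow, since the generators $u_1,\dots,u_t$ together carry $\Z^d$-degree exactly $t\fc$ in block $i$, leaving at least $t$ excess units of $e_i$ to be absorbed by $v$. Hence $f_2 = x_{ij}\,g$ with $g\in Z_t$ of strictly smaller internal degree, and we conclude by the induction on total degree. The one case this reduction cannot touch is when the sweeping leaves nothing to factor out because the cycle is already built entirely from the minimal $1$-cycles of block $i$, i.e.\ lands in $U_i^t$ with $U_i=Z_1(\mm^{\fc},S)_{\fc+e_i}$; this is exactly the exceptional alternative allowed in the statement.

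The main obstacle I anticipate is bookkeeping the $\sigma$-signs and the multi-index combinatorics across several blocks, to make sure the exchange elements $z'$ are genuine $1$-cycles and that the inductive ``bounded above by $t\fc+(t-1)\sum e_i$'' condition is preserved coordinate-by-coordinate rather than merely in total degree. In \ref{piper} there was a single variable $x_n$ to peel off and a single degree $q$ to decrease; here the bound is a vector and the argument must isolate the correct overflowing block $i$ and verify that reducing in block $i$ does not spoil the bound in the other blocks. Carefully separating the ``degree in block $i$'' from the rest, and invoking \ref{checksign} for the sign identities exactly as in Lemma \ref{gamma}, should make this routine but it is where the bulk of the verification lies.
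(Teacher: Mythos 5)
Your proposal follows the paper's proof in all essentials: the paper also treats \ref{multi} as a multigraded variant of \ref{piper}, fixing the $\Z^m$-grading, locating a block $i$ where the degree overflows (i.e.\ $|\alpha_i|\geq t(c_i+1)$), choosing a variable $x_{ij}$ (there $x_{11}$) with $\alpha_{ij}\neq 0$, sweeping with the block-$i$ exchange cycles $x_{ik}[u]-x_{ij}[ux_{ik}/x_{ij}]$, writing $f\equiv x_{ij}g$ modulo these correction terms, and concluding by induction on $t$ and on the degree. Two cosmetic remarks: since all generators of $\mm^{\fc}$ have degree $\fc$, your factor $w$ is always $1$ and the exchange cycles lie in $U_i$ itself, so the reduction is modulo $U_iZ_{t-1}$ (as in the paper) rather than all of $Z_1Z_{t-1}$; and the $U_i^t$ generators enter through $U_i\cdot U_i^{t-1}$ in the inductive expression of those correction terms, which is the precise version of your ``cannot touch'' case.

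One justification in your write-up is misplaced, although the step itself is correct. For the surviving terms $v[u_1,\dots,u_t]$ with $x_{ij}\nmid u_1\cdots u_t$, the reason that $x_{ij}\mid v$ is \emph{not} the overflow (``at least $t$ excess units of $e_i$''): that only shows $v$ has positive degree in block $i$, and $v$ could a priori be divisible only by variables $x_{ik}$ with $k\neq j$. The correct reason, exactly as in \ref{piper}, is $\Z^m$-homogeneity: every term of the reduced cycle has $x_{ij}$-exponent equal to $\alpha_{ij}>0$, so if $x_{ij}$ divides none of the $u_\ell$ it must divide $v$. Where the overflow $|\alpha_i|\geq t(c_i+1)$ is genuinely needed is one step earlier, to get the sweep started: when the coefficient $b$ from the decomposition (\ref{deco}) is expanded in terms of the inductively known generators of $Z_{t-1}$ --- those of degree at most $(t-1)\fc+(t-2)\sum e_s$ and those in the $U_j^{t-1}$ --- the overflow forces every expansion coefficient to have positive degree in block $i$, which is what guarantees that an exchange variable $x_{ik}$ dividing that coefficient exists whenever $x_{ij}$ does not divide it. With these two uses swapped into their proper places, your argument is exactly the paper's.
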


\begin{proof} Set $Z_t=Z_t(\mm^{\fc},S)$ and give it the natural
$\Z^m$ graded structure. The proof is a multigraded variant of the
argument used above in \ref{piper}. First note that given a monomial
generator $u$ of $\mm^{\fc}$, a variable $x_{ij}|u$  and $k$ such
that $1\leq k\leq m_i$ and $k\neq j$, the monomial
$v=ux_{ik}/x_{ij}$ belongs to $\mm^{\fc}$ and the element
$x_{ik}[u]-x_{ij}[v]$ belongs to $U_i$. It is well known that these
syzygies generate $Z_1$ and that $\mm^{\fc}$ has a linear
resolution. Now assume that $f\in Z_t$ is a $\Z^m$-homogeneous
element of degree $(\alpha_1,\dots,\alpha_d)$,
$\alpha_i=(\alpha_{i1},\dots,\alpha_{im_i})\in \Z^{m_i}$. Assume
that $|\alpha_i|\geq t(c_i+1)$ for some $i$, say for $i=1$. We may
also assume that $\alpha_{11}\neq 0$. Using induction on $t$, the
rewriting procedure described in the proof of \ref{piper} and the
linear syzygies described above, we may write $f=x_{11}g
\mod{U_1Z_{t-1}}$. Since $g\in Z_t$, the conclusion follows by
induction on $t$.
\end{proof}

Next we note that \cite[Lemma 3.4]{BCR} can be extended to the
present setting:

\begin{lem}
\label{newcycles} Let $\alpha\in \N^d$ be a vector such that
$\alpha\leq {\fc}$ componentwise. Let $a_1,a_2\dots,a_{t+1}$ be
monomials of $\Z^d$-degree equal to $\alpha$ and
$b_1,b_2\dots,b_{t}\in S$ monomials of degree ${\fc}-\alpha$. Then
\begin{equation}
\label{eq:cycle} \sum_{\sigma\in \mathcal{S}_{t+1}} (-1)^{\signum
\sigma} a_{\sigma(t+1)} [b_1a_{\sigma(1)}, b_2a_{\sigma(2)}, \dots,
b_ta_{\sigma(t)}]
\end{equation}
belongs to $Z_t(\mm^{\fc},S)$.
\end{lem}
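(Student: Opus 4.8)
The plan is to prove directly that the Koszul differential, which I denote by $\phi$ as in Section~\ref{Canonical maps}, kills the element $z$ displayed in (\ref{eq:cycle}); the mechanism will be a fixed-point-free, sign-reversing involution on the terms of $\phi(z)$. First I would check that $z$ is a well-defined element of $K_t(\mm^{\fc},S)$: each factor $b_j a_{\sigma(j)}$ has $\Z^d$-degree $(\fc-\alpha)+\alpha=\fc$, hence is a minimal monomial generator of $\mm^{\fc}=\prod_i\mm_i^{c_i}$ and gives an honest basis element $[b_j a_{\sigma(j)}]$ of $F$, while $a_{\sigma(t+1)}$ (of degree $\alpha$) is an admissible scalar. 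In particular $z$ is $\Z^d$-homogeneous of degree $t\fc+\alpha$.

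Next I would expand $\phi(z)$. Using $S$-linearity of $\phi$ together with $\phi([w_1,\dots,w_t])=\sum_{j=1}^t(-1)^{j-1}w_j[w_1,\dots,\widehat{w_j},\dots,w_t]$, one obtains a double sum over pairs $(\sigma,j)$ with $\sigma\in\mathcal{S}_{t+1}$ and $1\le j\le t$, the $(\sigma,j)$-term being $(-1)^{\signum\sigma}(-1)^{j-1}\,a_{\sigma(t+1)}b_j a_{\sigma(j)}\,[\dots,\widehat{b_j a_{\sigma(j)}},\dots]$, where the hat deletes the $j$-th entry of $[b_1a_{\sigma(1)},\dots,b_ta_{\sigma(t)}]$. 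The key step is to pair $(\sigma,j)$ with $(\sigma\tau,j)$, where $\tau=(j\ \ t{+}1)$ is the transposition exchanging $j$ and $t+1$. This is an involution without fixed points since $\tau\neq\mathrm{id}$. Writing $p=\sigma(t+1)$ and $q=\sigma(j)$, one has $(\sigma\tau)(t+1)=q$, $(\sigma\tau)(j)=p$, and $(\sigma\tau)(k)=\sigma(k)$ for every other $k$; consequently the deleted wedge $[b_k a_{\sigma(k)}:k\neq j]$ is literally unchanged, and the scalar $a_{\sigma(t+1)}b_j a_{\sigma(j)}=b_j a_p a_q$ is unchanged by commutativity of $S$. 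On the other hand, composing $\sigma$ with a transposition reverses its sign, so the prefactor $(-1)^{\signum\sigma}$ turns into $-(-1)^{\signum\sigma}$, while $(-1)^{j-1}$ stays the same.

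Since identical monomials then appear with opposite signs in the two paired terms, they cancel, and as the involution is fixed-point free this will force $\phi(z)=0$, i.e. $z\in Z_t(\mm^{\fc},S)$. The step I expect to require the most care is spotting and justifying the correct pairing: one swaps the ``absorbed'' index $\sigma(t+1)$ with the ``deleted'' index $\sigma(j)$, and then one must verify that $\tau$ moves exactly these two indices and nothing else, so that the reduced wedge and the coefficient monomial are preserved while the permutation sign flips. Everything else is routine degree bookkeeping.
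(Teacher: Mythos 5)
Your proof is correct. Note that the paper itself offers no argument for this lemma: it is stated as a direct extension of \cite[Lemma 3.4]{BCR}, and your computation is exactly the standard verification given there. Your pairing of $(\sigma,j)$ with $(\sigma\tau,j)$, $\tau=(j\;\,t{+}1)$, is the same cancellation that shows a determinant with two proportional rows vanishes, applied to the formal $(t+1)\times(t+1)$ array whose last row consists of the scalars $a_i$ and whose $j$-th row consists of the basis elements $[b_ja_i]$; the points you flag as delicate (the deleted wedge and the coefficient monomial $b_ja_pa_q$ being unchanged while only the permutation sign flips, and the involution being fixed-point free) all check out, including in the degenerate case where some bracket has repeated entries, since the differential formula you use holds formally on the exterior algebra.
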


Now we prove a multigraded version of \cite[Thm.3.6]{BCR}:

\begin{lem}
\label{multi2} For every $i=1,\dots,d$ let ${\ffb}={\fc}-e_i$ and
set $U_i=Z_1(\mm^{\fc},S)_{\fc+e_i}$. Then
$$(c_i+1)!\, \mm^{\ffb} U_i^{c_i}\subset \mm_i^{c_i} Z_{c_i}(\mm^{\fc},S)+B_{c_i}(\mm^{\fc},S).$$
\end{lem}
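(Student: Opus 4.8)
The plan is to reduce the inclusion to an explicit identity involving the symmetric cycles of Lemma \ref{newcycles}, and then to clear the remaining terms by exhibiting boundaries, exactly in the spirit of the monomial rewriting used in \ref{piper} and \ref{multi}.

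First I would record the shape of the generators of $U_i$. Since $\mm^{\fc}$ has a linear resolution, $U_i=Z_1(\mm^{\fc},S)_{\fc+e_i}$ is spanned by the linear syzygies living in the $i$-th block, namely by the cycles $a[bc]-c[ba]$, where $a,c$ are variables of $\mm_i$ and $b$ is a monomial of degree $\ffb=\fc-e_i$ (so that $ba$ and $bc$ are monomial generators of $\mm^{\fc}$); this is the observation already used in \ref{multi}. Hence $\mm^{\ffb}U_i^{c_i}$ is spanned by elements $m\,\zeta_1\cdots\zeta_{c_i}$, with $m$ a monomial of degree $\ffb$ and each $\zeta_\ell\in U_i$ of the above form, and it suffices to place $(c_i+1)!$ times each such product into $\mm_i^{c_i}Z_{c_i}+B_{c_i}$.

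The computational heart is the \emph{aligned} case, in which all $\zeta_\ell$ share a single base $b$ and a single pivot variable $a_0\in\mm_i$. Writing $u_s=ba_s$ for variables $a_0,a_1,\dots,a_{c_i}\in\mm_i$ and $\zeta_\ell=a_0[u_\ell]-a_\ell[u_0]$, I would expand the product $\zeta_1\cdots\zeta_{c_i}$ in the exterior algebra. Because the generator $[u_0]$ can occur at most once, every summand in which two factors contribute their $[u_0]$-term vanishes, and the expansion collapses to
\begin{equation*}
\zeta_1\cdots\zeta_{c_i}=a_0^{c_i-1}\sum_{q=0}^{c_i}(-1)^{q}\,a_q\,[u_0,\dots,\widehat{u_q},\dots,u_{c_i}].
\end{equation*}
On the other hand, let $w$ be the cycle furnished by Lemma \ref{newcycles} with $t=c_i$, common base $b$, and the $c_i+1$ pivots $a_0,\dots,a_{c_i}$ (here $\alpha=e_i\leq\fc$). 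Grouping its defining signed sum over $\mathcal{S}_{c_i+1}$ according to the pivot index, the remaining $c_i!$ orderings collapse and give
\begin{equation*}
w=c_i!\sum_{q=0}^{c_i}(-1)^{q}\,a_q\,[u_0,\dots,\widehat{u_q},\dots,u_{c_i}].
\end{equation*}
Comparing the two displays yields the clean identity $a_0^{c_i-1}\,w=c_i!\,\zeta_1\cdots\zeta_{c_i}$. Since $w\in Z_{c_i}$ by \ref{newcycles}, multiplying by $m$ gives $(c_i+1)!\,m\,\zeta_1\cdots\zeta_{c_i}=(c_i+1)\,m\,a_0^{c_i-1}\,w$; as $m\,a_0^{c_i-1}$ has block-$i$ degree $2(c_i-1)\geq c_i$ for $c_i\geq 2$, it lies in $\mm_i^{c_i}$, so the aligned case already lands in $\mm_i^{c_i}Z_{c_i}$ with no boundary needed (the case $c_i=1$ being the boundary trade below).

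The remaining, and main, difficulty is the general case, where the $\zeta_\ell$ carry different bases and pivots. Here the role of $m\in\mm^{\ffb}$ becomes decisive: since $\deg m=\ffb$, multiplication by $m$ upgrades each block-$i$ coefficient variable $a$ occurring in a factor $\zeta_\ell$ to a genuine monomial generator $ma$ of $\mm^{\fc}$. This is exactly the maneuver that settles $c_i=1$: from $m\zeta=ma[bc]-mc[ba]$ one trades the generator coefficient against the boundary $\phi([ma,bc])$ and then factors a variable of $\mm_i$ out of the remainder $bc[ma]-mc[ba]=c\bigl(b[ma]-m[ba]\bigr)$, with $b[ma]-m[ba]\in Z_1$, obtaining $m\zeta\in\mm_iZ_1+B_1$. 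Iterating this trade across all $c_i$ factors replaces a general product by an aligned one modulo $B_{c_i}$, after which the previous identity applies; the symmetrization over the $c_i+1$ possible distinguished factors is what upgrades $c_i!$ to $(c_i+1)!$, while the sign relation \ref{checksign} keeps the boundary terms organized. The hard part is precisely this bookkeeping: producing the explicit $(c_i+1)$-chains whose differentials absorb the discrepancy between a general product and the aligned cycle $w$, and verifying that every surviving coefficient indeed lies in $\mm_i^{c_i}$ after the trades. I would organize the whole argument as an induction on $c_i$, anchored by the verified case $c_i=1$ and driven by the identity $a_0^{c_i-1}w=c_i!\,\zeta_1\cdots\zeta_{c_i}$.
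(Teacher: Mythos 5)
Your preliminary computations are correct as far as they go: the description of the generators of $U_i$, the expansion of the aligned product, the identity $a_0^{c_i-1}w=c_i!\,\zeta_1\cdots\zeta_{c_i}$ (with $w$ the cycle of Lemma \ref{newcycles} taken with $\alpha=e_i$ and all $b_j$ equal to the common base $b$), the observation that $m\,a_0^{c_i-1}\in\mm_i^{c_i}$ once $c_i\geq 2$, and the $c_i=1$ trade $m\zeta=\phi([ma,bc])+c\bigl(b[ma]-m[ba]\bigr)$. But all of this treats only the degenerate situation in which every factor shares one base $b$ and one pivot $a_0$ (plus the case $c_i=1$), whereas the lemma is a statement about arbitrary products of elements of $U_i$.

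The genuine gap is the general case, and you acknowledge it yourself ("the hard part is precisely this bookkeeping"). Moreover the route you propose for it -- iterate the $c_i=1$ trade so as to replace a general product by an aligned one modulo $B_{c_i}$ -- is not merely unfinished but structurally doubtful. First, your aligned identity exists only because, when all brackets carry the same base $b$, the $(c_i+1)!$ terms of the cycle of Lemma \ref{newcycles} collapse in groups of $c_i!$; when the bases $a_1,\dots,a_{c_i}$ and the extra monomial $m$ are distinct monomials of degree $\ffb$, no such collapse occurs, and the correct normal form -- this is what the paper's proof produces, by invoking the symmetrization argument of \cite[Theorem 3.6]{BCR} -- is a sum $\sum y_{i_1 1}\cdots y_{i_{c_i} c_i}\,W_{i}$ of cycles of Lemma \ref{newcycles} taken with $\alpha=\ffb$, i.e.\ cycles that mix all $c_i+1$ degree-$\ffb$ monomials both inside and outside the brackets, while the chosen variables supply the coefficient in $\mm_i^{c_i}$. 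Such cycles are neither aligned products nor $\mm_i^{c_i}$-multiples of aligned products, so "reduce to the aligned case" aims at the wrong target. Second, the trade mechanism does not iterate: the single monomial $m$ is consumed by the first trade (it is absorbed into the generator $ma$), leaving nothing with which to upgrade the remaining $c_i-1$ factors, and the cycle $b[ma]-m[ba]$ that the trade produces has degree $\fc+\ffb$, which differs from $\fc+e_i$ in general, so it is not an element of $U_i$ and after one step you have left the class of products you started from. Finally, the assertion that symmetrizing over a "distinguished factor" upgrades $c_i!$ to $(c_i+1)!$ is a guess: in the actual argument the factor $(c_i+1)!$ arises from summing over all permutations of the $c_i+1$ monomials of degree $\ffb$, with a boundary correction at each swap and signs controlled by Lemma \ref{checksign}. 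That symmetrization is the entire substance of the lemma, and it is exactly what is missing from your proposal.
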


\begin{proof} Set $u=c_i$ and $Z_u=Z_{c_i}(\mm^{\fc},S)$ and $B_u=B_{c_i}(\mm^{\fc},S)$.
The generators of $U_i$ are of the form
$$z_a(y_0,y_1)=y_0[ay_1]-y_1[ay_0]$$ where
$a$ is a monomial of $\Z^d$-degree equal to ${\ffb}$ and $y_0,y_1\in
\{x_{i1}, \dots, x_{im_i}\}$. So we have to take $u$ such elements,
say $z_{a_j}(y_{0j},y_{1j})$ with $j=1,\dots,u$, another monomial of
degree ${\ffb}$, say $a_{u+1}$, and we have to prove that
\begin{equation}
(u+1)! a_{u+1}\prod_{j=1}^{u} z_{a_j}(y_{0j},y_{1j}) \in \mm_i^{u}
Z_{u}+B_{u}.
\end{equation}
\label{superg} The symmetrization argument given in the proof of
\cite[Theorem 3.6]{BCR} works in this case as well to prove that the
left hand side of (\ref{superg}) can be rewritten, modulo
boundaries, as
$$\sum y_{i_11}\cdots y_{i_{u},u} W_{i}$$
where $i=(i_1,\dots,i_u)\in \{0,1\}^u$ and $W_{i}$ are cycles of the
type described in \ref{newcycles}.
\end{proof}

An $\N^d$-graded $K$-algebra $R=\bigoplus_{\alpha \in \N^d}
R_\alpha$ is called standard if $R_0=K$ and $R$ is generated by
$R_{e_i}$ with $i=1,\dots,d$. Clearly $R$ can be presented as a
quotient of an $\N^d$-graded polynomial ring $S=K[x_{ij} : 1\leq
i\leq d, \ \ 1\leq j\leq m_i]$ with the $\N^d$-graded structure
induced by assigning $\deg x_{ij}=e_i\in \Z^d$. Given a vector
$${\fc}=(c_1,\dots,c_d)\in \N^d,$$
we can consider the \emph{Segre-Veronese subring} of $R$ associated
to it, namely
$$R^{(\fc)}=\bigoplus_{j\in \N} R_{j{\fc}}.$$

Our goal is to study the Green-Lazarsfeld index  of $R^{(\fc)}$. We
note that $R^{(\fc)}$ is a quotient ring of $S^{(\fc)}$. Furthermore
one has:
\begin{lem} \begin{itemize}
\item[(a)] $\reg_{S^{(\fc)}}(R^{(\fc)})=0$ for ${\fc}\gg 0$.
\item[(b)] $\ind(R^{(\fc)})\geq \ind(S^{(\fc)})$ for ${\fc}\gg 0$.
\end{itemize}
More precisely, both statements hold provided one has $i{\fc}\geq
\alpha$ componentwise for every $i\in \Z$ and $\alpha\in \Z^d$ such
that $\beta_{i,\alpha}^S(R)\neq 0$.
\end{lem}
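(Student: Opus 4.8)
The plan is to realise $R^{(\fc)}$ as the image of the Veronese of a finite free resolution of $R$ and to feed in a single external input about linear resolutions of Segre--Veronese modules. Throughout put $A=S^{(\fc)}$ and let $P$ be the polynomial ring on the vector space $A_1=S_{\fc}$, so that $P\twoheadrightarrow A\twoheadrightarrow R^{(\fc)}$. For $\beta\in\Z^d$ I would write $V_\beta=\Dirsum_j S_{j\fc+\beta}$, regarded as a $\Z$-graded $A$-module with $(V_\beta)_j=S_{j\fc+\beta}$; then $S(-\alpha)^{(\fc)}=V_{-\alpha}$, and from $(V_{\beta-\fc})_j=S_{(j-1)\fc+\beta}=(V_\beta)_{j-1}$ one gets the degree shift $V_{\beta-\fc}=V_\beta(-1)$. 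The one fact I would import is:
$$\textbf{(I)}\qquad \reg_A(V_\beta)\le 0\quad\text{for every }\beta\in\N^d,$$
i.e.\ the effective Segre--Veronese modules have linear $A$-resolutions. This is where the real work sits, and I expect it to be the main obstacle. For a single Veronese it is the classical linear resolution statement; in general it follows from the vanishing of the higher cohomology of the line bundles $\mathcal O(j\fc+\beta)$ on $\prod_k\PP^{m_k-1}$ (K\"unneth plus Bott's formula), or combinatorially from the sorting order giving $A$ a quadratic Gr\"obner basis. I would either cite this or prove it by the cohomological computation; everything else is formal once (I) is granted.

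The reduction step is this: if $\beta^S_{i,\alpha}(R)\neq 0$ and $i\fc\ge\alpha$ componentwise, set $\beta=i\fc-\alpha\ge 0$. Applying the shift $i$ times gives $S(-\alpha)^{(\fc)}=V_{-\alpha}=V_\beta(-i)$, so by (I)
$$\reg_A\big(S(-\alpha)^{(\fc)}\big)=\reg_A(V_\beta)+i\le i.$$
Thus the hypothesis $i\fc\ge\alpha$ is exactly what makes $\beta$ effective so that (I) applies. For (a) I would take the minimal $\Z^d$-graded free resolution $F_\bullet\colon 0\to F_n\to\cdots\to F_0=S\to R\to 0$, with $F_i=\Dirsum_\alpha S(-\alpha)^{\beta^S_{i,\alpha}(R)}$. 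Since $M\mapsto M^{(\fc)}$ is exact (exactness is checked in each multidegree), $F_\bullet^{(\fc)}$ is a finite acyclic complex of $A$-modules with $H_0=R^{(\fc)}$ and $F_i^{(\fc)}=\Dirsum_\alpha (S(-\alpha)^{(\fc)})^{\beta^S_{i,\alpha}(R)}$. The reduction step and the hypothesis give $\reg_A(F_i^{(\fc)})\le i$. Splitting $F_\bullet^{(\fc)}$ into short exact sequences of syzygies and using $\reg_A(C)\le\max(\reg_A(B),\reg_A(A')-1)$ for $0\to A'\to B\to C\to 0$ repeatedly (the complex is finite, so this terminates) yields
$$\reg_A(R^{(\fc)})\le\max_i\big(\reg_A(F_i^{(\fc)})-i\big)\le 0.$$
As $R^{(\fc)}$ is nonzero and generated in degree $0$, $\reg_A(R^{(\fc)})\ge 0$, hence $=0$. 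For $\fc\gg 0$ the finitely many conditions $i\fc\ge\alpha$ with $\beta^S_{i,\alpha}(R)\neq 0$ all hold, giving the stated precise form.

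For (b) I would deduce everything from (a) by the change-of-rings spectral sequence for $P\to A$ applied to the $A$-module $R^{(\fc)}$,
$$E^2_{p,q}=\Tor^A_p\big(R^{(\fc)},\Tor^P_q(A,K)\big)\Longrightarrow\Tor^P_{p+q}(R^{(\fc)},K).$$
Because $\Tor^P_q(A,K)$ is a graded $K$-vector space of top degree $t^P_q(A)$, the top degree of $E^2_{p,q}$ equals $t^A_p(R^{(\fc)})+t^P_q(A)$. By (a) one has $t^A_p(R^{(\fc)})\le p$, and for $q\le\ind(S^{(\fc)})$ one has $t^P_q(A)\le q+1$ (with $t^P_0(A)=0$). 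If $n\le\ind(S^{(\fc)})$, every $(p,q)$ with $p+q=n$ has $q\le n\le\ind(S^{(\fc)})$, so each $E^2_{p,q}$ is concentrated in degrees $\le n+1$; since $\Tor^P_n(R^{(\fc)},K)$ is filtered by subquotients of these terms, $t^P_n(R^{(\fc)})\le n+1$. Hence $R^{(\fc)}$ satisfies $N_n$ for all $n\le\ind(S^{(\fc)})$, i.e.\ $\ind(R^{(\fc)})\ge\ind(S^{(\fc)})$. To summarise, the only substantial point is input (I); the remainder is bookkeeping with the exactness of the Veronese functor, the regularity bound for a resolved module, and one change-of-rings spectral sequence.
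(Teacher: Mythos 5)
Your overall route is the same as the paper's, just written out in full: the paper disposes of (a) by citing Conca--Herzog--Trung--Valla \cite{CHTV}, whose bigraded argument is exactly your reduction (exactness of $M\mapsto M^{(\fc)}$, linearity of the shifted modules $V_\beta$, splicing the Veronese of a minimal $S$-free resolution of $R$), and disposes of (b) by citing \cite[Lemma 2.2]{BCR}, which is precisely your change-of-rings estimate. Your part (b) is complete and correct as written: $\Tor^P_q(A,K)$ is a graded $K$-vector space, so $E^2_{p,q}=\Tor^A_p(R^{(\fc)},K)\otimes_K\Tor^P_q(A,K)$ has top degree at most $t^A_p(R^{(\fc)})+t^P_q(A)$, and your bookkeeping (including the case $q=0$) is right. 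Your derivation of (a) from your input (I) is also correct, including the identification $S(-\alpha)^{(\fc)}=V_{i\fc-\alpha}(-i)$, which is exactly where the componentwise hypothesis $i\fc\geq\alpha$ enters.

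The gap is in your two proposed stand-alone justifications of (I); neither is valid, so (I) must be cited, not sketched. First, vanishing of the higher cohomology of the line bundles $\mathcal{O}(j\fc+\beta)$ on $\prod_k\PP^{m_k-1}$ controls the Castelnuovo--Mumford regularity of $V_\beta$ over the polynomial ring $P$, equivalently the Koszul-complex computation of $\Tor^P_\bullet(V_\beta,K)$; it gives no handle on $\Tor^A_\bullet(V_\beta,K)$, because $A$ is not regular and Tor over $A$ admits no Koszul-complex or Koszul-cohomology description. The two regularities genuinely differ here: already for $\beta=0$ one has $\reg_A(V_0)=0$ while $\reg_P(V_0)=\reg_P(A)>0$ in general (e.g.\ any quadric hypersurface Veronese or Segre), so no cohomological bound on $\reg_P$ can yield (I). Second, a quadratic (sorting) Gr\"obner basis gives Koszulness of the algebra $A$, i.e.\ $\reg_A(K)=0$; it does not give linearity of the $A$-resolutions of the modules $V_\beta$, and note that for $d\geq 2$ there are infinitely many $\beta$ to handle even after your shift reduction (e.g.\ $\beta=(0,N)$ for all $N\in\N$), so this is a statement about infinitely many modules, not about the algebra. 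What (I) actually rests on is: for $d=1$, the classical theorem (Backelin--Fr\"oberg; Aramova--Barcanescu--Herzog) that the Veronese slices $\bigoplus_j S_{jc+r}$ have linear $S^{(c)}$-resolutions; and for $d\geq 2$, either the diagonal argument of \cite{CHTV} or the fact that Segre products of Koszul modules over Koszul algebras are again Koszul modules. With (I) cited in that form, your proposal is a faithful unpacking of the paper's two citations; with either of your sketches in place of the citation, it has a real hole at exactly the step carrying all the content.
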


\begin{proof}
A detailed proof of (a) is given in the bigraded setting by Conca,
Herzog, Trung and Valla \cite{CHTV}. The same argument works as well
for multigradings. Then (b) follows from (a) and \cite[Lemma
2.2]{BCR}.
\end{proof}

Consider the symmetric algebra $T$ of the $K$-vector space $S_{\fc}$
(i.e. a polynomial ring of Krull dimension $\dim_K S_{\fc}$), and
the natural surjection $T\to S^{(\fc)}$.  The Betti numbers of
$S^{({\fc})}$ as a $T$-module can be computed via Koszul homology.

\begin{lem} We have $$\beta^T_{i,j}(S^{(\fc)})=H_i(\mm^{\fc},S)_{j{\fc}}.$$
\end{lem}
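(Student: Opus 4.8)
The plan is to identify the Betti numbers of $S^{(\fc)}$ over the polynomial ring $T$ with Koszul homology of the multiplication ideal $\mm^{\fc}$ inside $S$. Since $T=\Sym_K(S_{\fc})$ is a polynomial ring whose variables are in bijection with a $K$-basis of $S_{\fc}$, and $T\to S^{(\fc)}$ is the natural surjection, the graded Betti number $\beta^T_{i,j}(S^{(\fc)})$ is by definition $\dim_K \Tor_i^T(S^{(\fc)},K)_j$. The standard way to compute $\Tor$ over a polynomial ring is to tensor the minimal free resolution of $K=T/\mathfrak{n}$ (where $\mathfrak{n}$ is the maximal irrelevant ideal of $T$) with $S^{(\fc)}$, which is exactly the Koszul complex on the variables of $T$ with coefficients in $S^{(\fc)}$. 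Thus $\beta^T_{i,j}(S^{(\fc)})=\dim_K H_i\bigl(K(\mathfrak{n}, S^{(\fc)})\bigr)_j$, where $\mathfrak{n}$ acts via the surjection $T\to S^{(\fc)}$.

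Next I would translate this into Koszul homology over $S$. The key observation is that the image of $\mathfrak{n}$ under $T\to S^{(\fc)}$ is precisely the degree-$\fc$ piece $S_{\fc}$, which generates $\mm^{\fc}$ as an ideal of $S$; equivalently, the variables of $T$ map onto a minimal (indeed $K$-) generating set of $\mm^{\fc}$. By the remarks in Section \ref{Canonical maps}, the Koszul complex depends only on the image of the defining map as long as its kernel lands in $\mm F$, so the Koszul complex $K(\mathfrak{n},S^{(\fc)})$ computed over $T$ agrees, in each homological and internal degree, with the Koszul complex $K(\mm^{\fc},S)$ restricted to the relevant graded strands. Concretely, since $S^{(\fc)}=\bigoplus_j S_{j\fc}$ sits inside $S$ and the ideal generated by $S_{\fc}$ in $S$ is $\mm^{\fc}$, the homology $H_i(\mm^{\fc},S)$ in internal $\Z^d$-degree $j\fc$ recovers exactly the $i$-th homology of the Koszul complex defining the $T$-resolution in that degree.

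Finally I would match up the gradings. The $T$-grading is the single $\N$-grading in which each variable has degree $1$; a monomial of $T$-degree $j$ maps to an element of $S$ of $\Z^d$-degree $j\fc$. Hence the internal degree $j$ on the $T$-side corresponds to the $\Z^d$-degree $j\fc$ on the $S$-side, giving the claimed identity $\beta^T_{i,j}(S^{(\fc)})=H_i(\mm^{\fc},S)_{j\fc}$. The main obstacle, and the step requiring genuine care rather than routine bookkeeping, is verifying that the differentials of the two complexes coincide under this dictionary: one must check that the Koszul differential on $T$, which contracts against the chosen basis of $S_{\fc}$, is carried to the Koszul differential on $S$ for the generating set of $\mm^{\fc}$, including the correct sign conventions and the fact that no information is lost in passing between the $\Z^d$-graded strand $j\fc$ and the single $T$-degree $j$. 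Once this identification of complexes in each fixed degree is established, taking homology and dimensions over $K$ yields the result.
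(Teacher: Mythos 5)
Your proof is correct and is essentially the paper's own argument: the paper merely notes that $S^{(\fc)}$ is a direct summand of $S$ as a graded $S^{(\fc)}$-module and cites \cite[Lemma 4.1]{BCR}, which amounts exactly to your identification of the degree-$j$ strand of the Koszul complex $K(\mathfrak{n},S^{(\fc)})$ computing $\Tor_i^T(S^{(\fc)},K)$ with the degree-$j\fc$ strand of $K(\mm^{\fc},S)$. One correction: the Section~2 remark you invoke (independence of $\phi$ with $\ker\phi\subseteq\mm F$, over a \emph{fixed} ring) is not what justifies that identification; it follows instead from the fact that every generator of $\mm^{\fc}$ has degree $\fc$, so that $(\bigwedge^i F\otimes S)_{j\fc}=\bigwedge^i F\otimes S_{(j-i)\fc}=\bigwedge^i F\otimes (S^{(\fc)})_{j-i}$, and both differentials are the same signed alternating sums of multiplications by the monomial basis of $S_{\fc}$ --- this is precisely the ``direct summand'' observation in the paper's proof.
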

\begin{proof}
One notes that $S^{(\fc)}$ is a direct summand of $S$  and then
proceeds as in \cite[Lemma 4.1]{BCR}
 \end{proof}

So we may reinterpret  \ref{multi}   in terms of syzygies of
$S^{(\fc)}$, obtaining:
\begin{cor}
\label{multihomo} One has $\beta^T_{i,j}(S^{(\fc)})=0$ provided
$(j-i-1)\min(\fc)\geq i$.
\end{cor}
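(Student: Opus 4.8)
The plan is to translate the vanishing statement into the language of Koszul homology via the Betti-number identity $\beta^T_{i,j}(S^{(\fc)})=H_i(\mm^{\fc},S)_{j\fc}$ just established, and then deduce the vanishing from the generation result for cycles in Lemma \ref{multi}. Concretely, I would fix $i$ and $j$ with $(j-i-1)\min(\fc)\geq i$ and show that $H_i(\mm^{\fc},S)_{j\fc}=0$, which is equivalent to the claimed $\beta^T_{i,j}(S^{(\fc)})=0$. Since homology in homological degree $i$ is a subquotient of the cycles $Z_i=Z_i(\mm^{\fc},S)$, the natural strategy is to argue that every cycle of the relevant $\Z^d$-degree $j\fc$ is in fact a boundary, using the structural description of the generators of $Z_i$ provided by Lemma \ref{multi}.

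The first concrete step is to unwind what Lemma \ref{multi} says in the multidegree $j\fc$. That lemma asserts $Z_i=Z_i(\mm^{\fc},S)$ is generated by elements whose $\Z^d$-degree is bounded above (componentwise) by $i\fc+(i-1)\sum_k e_k$, together with elements lying in some $U_k^i$ with $U_k=Z_1(\mm^{\fc},S)_{\fc+e_k}$. I would examine whether a generator of the first kind can contribute to the component in degree $j\fc$: such a generator has total $\Z^d$-degree $\leq i\fc+(i-1)\sum_k e_k$, and for it to live in degree $j\fc$ one needs $j\fc\leq i\fc+(i-1)\sum_k e_k$ componentwise, i.e. $(j-i)c_k\leq i-1$ for every $k$, so $(j-i)\min(\fc)\leq i-1$. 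Our hypothesis $(j-i-1)\min(\fc)\geq i$ rewrites as $(j-i)\min(\fc)\geq i+\min(\fc)\geq i$, which is incompatible with $(j-i)\min(\fc)\leq i-1$ unless the multidegree can only be hit from the $U_k^i$ part. Thus the hypothesis is exactly calibrated so that the only generators of $Z_i$ reaching degree $j\fc$ (and above) are those coming from the submodules $U_k^i$.

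The key structural point is then that the cycles in $U_k^i=Z_1^i$-type products are (decomposable) products of $1$-cycles, and decomposable products of $1$-cycles are well understood: one expects them to be boundaries in positive homological degree, since the homology class of a product $z_1\wedge\cdots\wedge z_i$ of $1$-cycles maps to a product in the Koszul homology algebra $H(\mm^{\fc},S)$, and in these degrees that product vanishes. More carefully, I would argue that the $U_k^i$ contribution to $H_i(\mm^{\fc},S)_{j\fc}$ vanishes, so that $Z_i$ in degree $j\fc$ consists entirely of boundaries; combined with the first step this forces $H_i(\mm^{\fc},S)_{j\fc}=0$ and hence $\beta^T_{i,j}(S^{(\fc)})=0$.

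I expect the main obstacle to be the second and third steps taken together: pinning down the precise numerical interplay between the bound $i\fc+(i-1)\sum_k e_k$ and the target degree $j\fc$ under the hypothesis $(j-i-1)\min(\fc)\geq i$, and then justifying rigorously that the $U_k^i$-generators do not survive to nonzero homology in these degrees. The numerical bookkeeping with the vector $\min(\fc)$ controlling all coordinates simultaneously is delicate, and the argument that products of $1$-cycles become boundaries in the relevant range is where the real content lies; the rest is a clean transcription of Lemma \ref{multi} through the Betti-number dictionary.
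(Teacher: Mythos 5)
Your overall skeleton---translate via $\beta^T_{i,j}(S^{(\fc)})=H_i(\mm^{\fc},S)_{j\fc}$ and then exploit the generator description of Lemma \ref{multi}---is indeed the paper's, but both of your main steps contain genuine gaps. For the generators of the first kind you compute the condition for a generator to have degree \emph{exactly} $j\fc$, observe it fails under the hypothesis, and conclude that these generators are irrelevant to the component in degree $j\fc$. That inference is wrong: the component $(Z_i)_{j\fc}$ is spanned by $S$-\emph{multiples} $m\cdot g$ of generators $g$ of lower degree, and low-degree generators of the first kind certainly do contribute such multiples. Your argument never addresses these contributions at all.

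For the $U_k^i$ generators you appeal to the claim that products of $1$-cycles are boundaries because their classes ``vanish in the Koszul homology algebra in these degrees.'' This is unjustified, and it is false in the natural degree $i\fc+ie_k$ of such a product: the nonvanishing of exactly these products in homology is the obstruction the paper fights in Theorem \ref{check}, which is why Lemma \ref{multi2} needs the factor $(c_i+1)!$ and a characteristic assumption. No such input is needed (or available) for this corollary. The missing key fact is the elementary one the paper uses: $\mm^{\fc}$ annihilates $H_i(\mm^{\fc},S)$, equivalently $\mm^{\fc}Z_i\subseteq B_i$. The hypothesis $(j-i-1)\min(\fc)\geq i$ is exactly the componentwise inequality $j\fc\geq i(\fc+\sum_k e_k)+\fc$, and since every generator of $Z_i$ (of either kind) has degree bounded by $i\fc+(i-1)\sum_k e_k$ or equal to $i\fc+ie_k$, any multiplier $m$ carrying a generator into degree $j\fc$ has degree $\geq\fc$ componentwise, hence $m\in\mm^{\fc}$. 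Therefore $(Z_i)_{j\fc}\subseteq\mm^{\fc}Z_i\subseteq B_i$, which kills \emph{all} contributions---of both kinds---at once and gives $H_i(\mm^{\fc},S)_{j\fc}=0$. With this replacement for your second and third steps the argument closes; as written, it does not.
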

\begin{proof}
Since $\mm^{\fc}$ annihilates $H_i(\mm^{\fc},S)$, it follows from
\ref{multi} that $H_i(\mm^{\fc},S)_{\alpha}=0$ for every $\alpha\geq
i(\fc+\sum e_i)+\fc$ componentwise. Replacing $\alpha$ with $j\fc$
we have that $\beta^T_{i,j}(S^{(\fc)})=0$ if $j\fc\geq i(\fc+\sum
e_i)+\fc$ which is equivalent to $(j-i-1)\min(\fc)\geq i$.
\end{proof}

In \cite{HSS} Hering, Schenck and Smith proved that
$\ind(S^{(c)})\geq \min(c)$. We improve the bound by one:
\begin{thm}
\label{check} One has $\min(\fc)\leq \ind(S^{(\fc)})$. Moreover,
$\min(\fc)+1\leq \ind(S^{(\fc)})$ if $\chara K=0$ or $\chara
K>1+\min(c)$.
\end{thm}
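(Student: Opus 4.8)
The plan is to obtain the first inequality for free from Corollary \ref{multihomo}, and to reduce the ``$+1$'' improvement to a single extremal Koszul homology vanishing, which I would then settle by combining the structural Lemma \ref{multi} with the symmetrization Lemma \ref{multi2}. Throughout write $c=\min(\fc)$ and recall that $\beta^T_{i,j}(S^{(\fc)})=H_i(\mm^{\fc},S)_{j\fc}$, so that property $N_p$ amounts to $\beta^T_{i,j}(S^{(\fc)})=0$ for all $j\geq i+2$ and $1\leq i\leq p$. For the first inequality I would simply note that if $i\leq c$ and $j\geq i+2$ then $(j-i-1)\min(\fc)\geq \min(\fc)\geq i$, so Corollary \ref{multihomo} forces the vanishing; hence $N_c$ holds and $c\leq\ind(S^{(\fc)})$. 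No assumption on the characteristic is needed here, consistently with the fact that Lemma \ref{multi} (hence Corollary \ref{multihomo}) is characteristic free.

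For the second inequality the only genuinely new requirement beyond $N_c$ is the case $i=c+1$, where I need $\beta^T_{c+1,j}(S^{(\fc)})=0$ for all $j\geq c+3$. Corollary \ref{multihomo} already covers $j\geq c+4$ (there $(j-c-2)c\geq c+1$ once $c\geq 1$), so everything comes down to the single vanishing
$$H_{c+1}(\mm^{\fc},S)_{(c+3)\fc}=0.$$
To prove this I would use Lemma \ref{multi} to control the generators of $Z_{c+1}=Z_{c+1}(\mm^{\fc},S)$ that contribute in degree $(c+3)\fc$. A generator of $\Z^d$-degree $\delta\leq (c+1)\fc+c\sum_{\ell}e_\ell$ can only reach degree $(c+3)\fc$ after multiplication by a monomial $m$; since $\delta_\ell\leq (c+1)c_\ell+c\leq (c+2)c_\ell$ (using $c\leq c_\ell$), the $\ell$-th block degree $(c+3)c_\ell-\delta_\ell$ of $m$ is $\geq c_\ell$ for every $\ell$, so $m\in\mm^{\fc}$. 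As $\mm^{\fc}$ annihilates $H_{c+1}$, these contributions are boundaries. The same annihilation argument eliminates the generators of $U_i^{c+1}$ whenever $c_i>c$, because reaching degree $(c+3)\fc$ then again forces the multiplying monomial into $\mm^{\fc}$. Thus $H_{c+1}$ in degree $(c+3)\fc$ is generated by terms $m\,U_i^{c+1}$ with $c_i=c$ and $m$ a monomial whose $i$-th block has degree $c-1$.

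For these surviving terms I would factor $m=m''m'$ with $m'\in\mm^{\ffb}$ (where $\ffb=\fc-e_i$) and $m''\in\prod_{\ell\neq i}\mm_\ell^{c_\ell}$, and write $m\,U_i^{c+1}\subseteq U_i\cdot(m\,U_i^{c})$ by peeling off one factor. Lemma \ref{multi2} (with $i$ the minimizing index, so $c_i=c$) gives $(c+1)!\,m'U_i^{c}\subseteq \mm_i^{c}Z_{c}+B_{c}$, and multiplying by $m''$ uses $m''\mm_i^{c}\subseteq\mm^{\fc}$ together with $\mm^{\fc}Z_c\subseteq B_c$ to yield $(c+1)!\,m\,U_i^{c}\subseteq B_c$. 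Multiplying by $U_i\subseteq Z_1$ and using that boundaries form an ideal in the cycle algebra ($Z_1 B_c\subseteq B_{c+1}$) gives $(c+1)!\,m\,U_i^{c+1}\subseteq B_{c+1}$. Finally the hypothesis $\chara K=0$ or $\chara K>1+\min(\fc)=c+1$ makes $(c+1)!$ invertible, so $m\,U_i^{c+1}\subseteq B_{c+1}$, completing the vanishing and hence $N_{c+1}$.

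I expect the main obstacle to be the bookkeeping that reconciles the power $U_i^{c+1}$ produced by Lemma \ref{multi} with the power $U_i^{c_i}=U_i^{c}$ handled by Lemma \ref{multi2}: one must peel off exactly one factor of $U_i$ to drop from homological degree $c+1$ to $c$, while simultaneously splitting $m$ so that a $\mm^{\ffb}$-part feeds into Lemma \ref{multi2} and the leftover $m''$ recombines with $\mm_i^{c}$ into the full $\mm^{\fc}$ that annihilates $H_c$. Making these degrees match exactly — and verifying that it is precisely the index achieving $\min(\fc)$ whose cycles are not already killed for degree reasons — is the delicate point; the characteristic hypothesis enters only at the very end, purely to invert $(c+1)!$.
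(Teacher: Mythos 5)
Your proposal is correct and follows essentially the same route as the paper: both deduce $N_{\min(\fc)}$ from Corollary \ref{multihomo}, then kill the extremal classes of $H_{\min(\fc)+1}$ by splitting the generators of $Z_{\min(\fc)+1}$ via Lemma \ref{multi}, absorbing the type-(1) generators and the $U_i^{c+1}$ with $c_i>\min(\fc)$ into $\mm^{\fc}Z\subseteq B$, and handling the case $c_i=\min(\fc)$ with the symmetrization Lemma \ref{multi2} and the invertibility of $(c_i+1)!$. The only (harmless) difference is that you isolate the single degree $j=\min(\fc)+3$ and invoke \ref{multihomo} for larger $j$, whereas the paper runs the same multiplier-degree argument uniformly for all $j>\min(\fc)+2$.
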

\begin{proof}
The first statement is an immediate consequence of \ref{multihomo}.
In fact, if $i\leq \min(\fc)$ then $(j-i-1)\min(\fc)\geq i$ for
every $j>i+1$ and hence, by \ref{multihomo}, $t_i^T(S^{(\fc)})=i+1$.
Set $u=\min(\fc)$. For the second statement, we have to show that
$H_{u+1}(\mm^{\fc},S)_{j\fc}=0$ for every $j>u+2$. By virtue of
\ref{multi} we know that $Z_{u+1}(\mm^{\fc},S)$ is generated by :

\begin{itemize}
\item[(1)] elements of degree $\leq (u+1)\fc+u\sum_s e_s$ and
\item[(2)]  elements of $U_i^{u+1}$ where $U_i=Z_1(\mm^{\fc},S)_{\fc+e_i}$
and $i=1,\dots,d$; they have degree $(u+1)\fc+(u+1)e_i$.
\end{itemize}
So an element $f\in Z_{u+1}(\mm^{\fc},S)_{j\fc}$ can come from a
generator of type (1) by multiplication of elements of degree
$\alpha\in \Z^d$ such that
$$\alpha\geq j\fc-(u+1)\fc+u\sum_s e_s.$$
Since $j>u+2$, we have
$$\alpha \geq 2\fc+u\sum_s e_s\geq \fc. $$
So $f\in \mm^{\fc} Z_{u+1}(\mm^{\fc},S)$ and $f=0$ in homology.
Alternatively, $f\in Z_{u+1}(\mm^{\fc},S)_{j\fc}$ can come from a
generator of type (2) by multiplication of elements of degree
$\alpha\in \Z^d$ such that
$$\alpha= j\fc-(u+1)\fc-(u+1)e_i\geq 2\fc-(u+1)e_i.$$
If $c_i>u$ then $\alpha\geq \fc$, and we conclude as above that
$f=0$ in homology. If, instead, $c_i=u$, then $\alpha\geq
2\fc-(c_i+1)e_i$. We have that
$$f\in \mm^{2\fc-(c_i+1)e_i}U_i^{c_i+1}= \mm^{\fc-c_ie_i}\mm^{\fc-e_i}U_i^{c_i}U_i.$$
But, assuming $K$ has either characteristic $0$ or $>u+1$,
\ref{multi2} implies:
$$\mm^{\fc-e_i}U_i^{c_i}\subset \mm_i^{c_i}Z_{c_i}(\mm^{\fc},S)+B_{c_i}(\mm^{\fc},S).$$
Hence
$$f\in \mm^{\fc}Z_{u}(\mm^{\fc},S)U_i+B_{c_i}(\mm^{\fc},S)U_i\subset B_{c_i+1}(\mm^{\fc},S)$$
and we conclude that $f=0$ in homology.
\end{proof}

\section{Generating Koszul cycles}
\label{Gency}

In this section we consider the Koszul cycles $Z_t(I,R)$ where $R$
is standard graded and $I$ is a homogeneous ideal. For simplicity,
in this section we let $Z_t$ denote the cycles $Z_t(I,R)$, and
similarly write $B_t$, $H_t$ and $K_t$ for boundary, homology and
components of the Koszul complex $K(I,R)$. We consider the
multiplication map
\begin{equation}
\label{mianpo} Z_{s} \otimes Z_{t} \to Z_{s+t}
\end{equation}
and we want to understand in which degrees it is surjective. Note
that the map (\ref{mianpo}) has a factorization
$$Z_{s} \otimes Z_{t} \overset{u_{s,t}} {\longrightarrow} Z_s (I, Z_t )
\overset{\alpha_t}{\longrightarrow} Z_{s+t}$$ where the first map
$u_{s,t}$ is the canonical one and the second is the map $\alpha_t$
described in Section \ref{Canonical maps}.

\begin{prop}
\label{surge} Suppose $R$ has characteristic $0$ or larger than
$s+t$. Then:
\begin{itemize}
\item[(1)] The multiplication map $Z_{s}\otimes Z_{t} \to Z_{s+t} $ is
surjective in degree $j$ if the module $\Tor_1^R(K_{s-1}/B_{s-1},
Z_t)$ vanishes in degree $j$.
\item[(2)] If $R=K[x_1,\dots,x_n]$ and $\dim R/I=0$ then the
multiplication map $Z_{s}\otimes Z_{t} \to Z_{s+t}$ is surjective in
degree $j$ for every $j\geq \reg_R Z_s+\reg_R Z_t$. In particular,
the map $Z_{s}\otimes Z_{t} \to Z_{s+t}$ is surjective in degree $j$
for every $j\geq (s+t)(\reg_R(I)+1)$.
\end{itemize}
\end{prop}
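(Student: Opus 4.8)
The plan is to exploit the factorization $Z_{s}\otimes Z_{t}\xrightarrow{u_{s,t}}Z_s(I,Z_t)\xrightarrow{\alpha_t}Z_{s+t}$ recorded just before the statement. Since $\chara K=0$ or $>s+t$, the integer $\binom{s+t}{s}$ is invertible in $R$, so the identity $\alpha_t\circ\beta_t=\binom{s+t}{s}\id$ from Section \ref{Canonical maps} shows that $\alpha_t$ is split surjective in every degree. Hence the cokernel of the multiplication map is a quotient of $\Coker(u_{s,t})$, and the whole problem reduces to controlling in which degrees $u_{s,t}$ is surjective, that is, to computing $\Coker(u_{s,t})$.

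For part (1) I would identify $\Coker(u_{s,t})$ with $\Tor_1^R(K_{s-1}/B_{s-1},Z_t)$. Tensoring $0\to Z_s\to K_s\xrightarrow{\phi}B_{s-1}\to 0$ with $Z_t$ and using that $K_s$ is free shows that the image of $u_{s,t}$ in $K_s\otimes Z_t$ is $\ker(K_s\otimes Z_t\to B_{s-1}\otimes Z_t)$, while $Z_s(I,Z_t)=\ker(K_s\otimes Z_t\xrightarrow{\phi\otimes 1}K_{s-1}\otimes Z_t)$. As $\phi\otimes 1$ factors as $K_s\otimes Z_t\twoheadrightarrow B_{s-1}\otimes Z_t\xrightarrow{\iota\otimes 1}K_{s-1}\otimes Z_t$, a short diagram chase yields $\Coker(u_{s,t})\cong\ker(\iota\otimes 1)=\Tor_1^R(K_{s-1}/B_{s-1},Z_t)$, the last equality coming from tensoring $0\to B_{s-1}\to K_{s-1}\to K_{s-1}/B_{s-1}\to 0$ with $Z_t$ (again $K_{s-1}$ is free). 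Combined with the first paragraph this proves (1).

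For part (2) set $W=K_{s-1}/B_{s-1}$, so by (1) it suffices to show $\Tor_1^R(W,Z_t)_j=0$ for $j\ge\reg_R Z_s+\reg_R Z_t$. Since $\dim R/I=0$ we have $V(I)=\{\mm\}$, so on the punctured spectrum $K(I,R)$ is split exact and $Z_t$ is locally free; hence $\Tor_i^R(W,Z_t)$ is supported at $\mm$, and of finite length, for every $i\ge 1$. Because all these higher Tor modules have dimension $\le 1$, the regularity-of-Tor bound of Eisenbud--Huneke--Ulrich \cite{EHU} gives $\reg_R\Tor_1^R(W,Z_t)\le\reg_R W+\reg_R Z_t+1$, and finite length turns this into the vanishing of $\Tor_1^R(W,Z_t)$ above degree $\reg_R W+\reg_R Z_t+1$. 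It then remains to prove the sharp comparison $\reg_R W\le\reg_R Z_s-2$, which I would extract from local cohomology: feeding $0\to Z_s\to K_s\to B_{s-1}\to 0$ and $0\to B_{s-1}\to K_{s-1}\to W\to 0$ (with $K_s,K_{s-1}$ free, hence with $H^i_\mm=0$ for $i<n$) into the long exact sequences for $H^\bullet_\mm(-)$ produces isomorphisms $H^j_\mm(W)\cong H^{j+2}_\mm(Z_s)$ for $j\le n-3$, so that $a^j(W)+j=(a^{j+2}(Z_s)+(j+2))-2\le\reg_R Z_s-2$ for those $j$. This yields $\reg_R\Tor_1^R(W,Z_t)\le\reg_R Z_s+\reg_R Z_t-1$ and hence the asserted vanishing; the ``in particular'' clause then follows by inserting $\reg_R Z_s\le s(\reg_R(I)+1)$ and $\reg_R Z_t\le t(\reg_R(I)+1)$ from \ref{regb2}.

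The main obstacle is exactly the sharp estimate $\reg_R W\le\reg_R Z_s-2$. The clean isomorphisms $H^j_\mm(W)\cong H^{j+2}_\mm(Z_s)$ hold only in the interior range $j\le n-3$, whereas the top cohomological degrees $j\in\{n-2,n-1,n\}$ acquire correction terms coming from the top local cohomology $H^n_\mm(K_s)$ and $H^n_\mm(K_{s-1})$ of the free Koszul modules and from the finite-length Koszul homologies $H_{s-1}$ and $H_s$. Verifying that these boundary terms also obey the bound $\reg_R Z_s-2$ is the delicate point, together with pinning down the precise form of the Eisenbud--Huneke--Ulrich estimate that remains valid when only the higher Tor modules, and not the tensor product itself, have dimension $\le 1$.
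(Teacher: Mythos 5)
Your part (1) is correct and is exactly the paper's argument: the paper tensors the same two short exact sequences with $Z_t$ and phrases the conclusion as ``$u_{s,t}$ is surjective in degree $j$ iff $g$ is injective in degree $j$''; your identification of $\Coker(u_{s,t})$ with $\Tor_1^R(K_{s-1}/B_{s-1},Z_t)$ is a slightly sharper packaging of the same diagram chase. Part (2) also follows the paper's route (localize away from $\mm$ to see that $\Tor_i^R(-,Z_t)$ has finite length for $i\ge 1$, then apply \cite{EHU}); moreover your worry about \cite{EHU} is unfounded, since the hypothesis of \cite[Corollary 3.1]{EHU} is $\dim\Tor_1^R(M,N)\le 1$ --- a condition on $\Tor_1$, not on $M\otimes N$ --- and here $\Tor_1$ has dimension $0$. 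The genuine gap is the one you flag yourself and never close: the estimate $\reg_R(K_{s-1}/B_{s-1})\le\reg_R Z_s-2$, which the paper asserts in one line as an equality. Your interior isomorphisms $H^j_\mm(W)\cong H^{j+2}_\mm(Z_s)$ for $j\le n-3$ (with $n=\dim R$) do not suffice: among the boundary degrees, $j=n-2$ is actually harmless because $H^{n-2}_\mm(W)\cong H^{n-1}_\mm(B_{s-1})$ embeds into $H^n_\mm(Z_s)$, but $H^{n-1}_\mm(W)$ embeds into $H^n_\mm(B_{s-1})$, which is a quotient of $H^n_\mm(K_s)$, and $H^n_\mm(W)$ is a quotient of $H^n_\mm(K_{s-1})$; turning these into the bound $\reg_R Z_s-2$ is equivalent to a lower bound $\reg_R Z_s\ge t_0^R(K_s)+1$ (note $t_0^R(K_s)\ge t_0^R(K_{s-1})+1$), which appears nowhere in your argument. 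As written, the proof of (2) is therefore incomplete.

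The missing ingredient is true and elementary, so your plan can be completed, and in fact without local cohomology. The standard regularity estimates for short exact sequences give $\reg_R W\le\max\{t_0^R(K_{s-1}),\reg_R B_{s-1}-1\}$ and $\reg_R B_{s-1}\le\max\{t_0^R(K_s),\reg_R Z_s-1\}$, hence
\begin{equation*}
\reg_R W\le\max\{\,t_0^R(K_{s-1}),\ t_0^R(K_s)-1,\ \reg_R Z_s-2\,\},
\end{equation*}
so everything reduces to showing $\reg_R Z_s\ge t_0^R(K_s)+1$. First, $Z_s\subseteq\mm K_s$: writing out the coefficient of each basis element $e_{J'}$ of $K_{s-1}$ in the equation $\phi(z)=0$ and reducing modulo $\mm I$, the minimality of the presentation ($\ker\phi\subseteq\mm F$, i.e.\ the $u_i$ minimally generate $I$) forces all scalar coefficients of a homogeneous cycle $z$ to vanish. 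Now let $e_{J_0}$ be a basis element of $K_s$ of maximal degree $t_0^R(K_s)$, let $\pi:K_s\to Re_{J_0}$ be the projection, and pick $p\notin J_0$ (possible unless $s\ge\rank F$, in which case $Z_{s+t}=0$ and (2) is trivial). The boundary $\phi(e_{J_0\cup\{p\}})\in Z_s$ has coefficient $\pm u_p\neq 0$ on $e_{J_0}$, so $\pi(Z_s)$ is a nonzero submodule of $\mm e_{J_0}$; consequently some minimal generator of $Z_s$ has nonzero image under $\pi$, hence degree at least $t_0^R(K_s)+1$, giving $\reg_R Z_s\ge t_0^R(Z_s)\ge t_0^R(K_s)+1$. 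This yields $\reg_R W\le\reg_R Z_s-2$ and completes your argument; it is also, in effect, the justification behind the paper's unproved claim $\reg_R K_{s-1}/B_{s-1}=\reg_R Z_s-2$.
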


\begin{proof} To prove (1) we note that, as $\alpha_t$ is surjective,
we may as well consider the map $u_{s,t}:Z_{s}\otimes Z_{t} \to Z_s
(\phi, Z_t)$. Tensoring
$$0\to Z_s\to K_s\to B_{s-1}\to 0$$
and
$$0\to B_{s-1}\to K_{s-1}\to K_{s-1}/B_{s-1}\to 0$$
 with $Z_t$, we have exact sequences
$$Z_s\otimes Z_t\to K_s\otimes Z_t \overset{f}{\to} B_{s-1}\otimes Z_t\to 0$$
and
$$\Tor_1^R(K_{s-1}/B_{s-1}, Z_t) \to B_{s-1}\otimes Z_t\overset{g}{\to} K_{s-1}\otimes Z_t.$$
The composition $g\circ f$ is the map of the Koszul complex
$K_s\otimes Z_t \to K_{s-1}\otimes Z_t$. So $Z_s(\phi,Z_t)=\ker
(g\circ f)$ and the image of $u_{s,t}$ is $\ker f$. It follows that
$u_{s,t}$ is surjective in degree $j$ iff $g$ is injective in degree
$j$, that is $\Tor_1^R(K_{s-1}/B_{s-1}, Z_t)$ vanishes in degree
$j$.

To prove (2) we first  observe, since $\sqrt{I}=\mm$,  one has that
$(Z_t)_P$  is free for every prime ideal $P\neq \mm$. Hence
$\Tor_i^R(M,Z_t)$ has Krull dimension $0$ for every finitely
generated $R$-module $M$ and every $t\geq 0$ and $i>0$.

Then  we may apply \cite[Corollary 3.1]{EHU} and have that
$$\reg_R \Tor^R_i(M,Z_t)\leq \reg_R M+\reg_R Z_t+i$$ and in particular
$$\reg_R \Tor^R_1(K_{s-1}/B_{s-1},Z_t)\leq \reg_R K_{s-1}/B_{s-1} +\reg_R Z_t+1.$$
But $ \reg_R K_{s-1}/B_{s-1}=\reg_R Z_s-2$ and hence
$$\reg_R \Tor^R_1(K_{s-1}/B_{s-1},Z_t)\leq \reg_R Z_s +\reg_R Z_t-1$$
In other words,  $\Tor^R_1(K_{s-1}/B_{s-1},Z_t)$ vanishes in degrees
$\geq \reg_R Z_s +\reg_R Z_t$. Together with (1) this concludes the
proof of (2).
\end{proof}

\begin{thm}
\label{maincyc} Assume that $R$ is Koszul and $K$ has characteristic
$0$ or $>t$ and take $I=\mm^c$. Then for every $t$ the module
$Z_t/Z_1^t$ vanishes in degree $\geq t(c+1)$ and $Z_1^t$ has an
$R$-linear resolution.
\end{thm}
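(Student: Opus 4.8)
The plan is to prove both assertions simultaneously by induction on $t$, carrying as a joint hypothesis that $Z_{t-1}/Z_1^{t-1}$ vanishes in degrees $\geq (t-1)(c+1)$ and that $Z_1^{t-1}$ has a linear resolution (so it is generated in the single degree $(t-1)(c+1)$ with $\reg_R Z_1^{t-1}=(t-1)(c+1)$). The case $t=1$ is immediate: $Z_1/Z_1^1=0$, while $\reg_R Z_1\le c+1$ by \ref{regb1} (with $I=\mm^c$, $M=R$, whose quotient $R/\mm^c$ has dimension $0$); since the minimal generators of $\mm^c$ form a $K$-basis of $R_c$, no nonzero $1$-cycle lives in degree $c$, so $\indeg Z_1=c+1$ and $Z_1$ has a linear resolution. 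Throughout I will use freely that $\reg_R Z_t\le t(c+1)$, again by \ref{regb1}. For the inductive step I interpose $Z_1^t\subseteq Z_1Z_{t-1}\subseteq Z_t$ and show, in every degree $d\ge t(c+1)$, first that $(Z_t)_d=(Z_1Z_{t-1})_d$ (surjectivity of multiplication) and then that $(Z_1Z_{t-1})_d=(Z_1^t)_d$ (absorption).

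The absorption step is the easy half. A degree-$d$ element of $Z_1Z_{t-1}$ is a sum of products $z\cdot w$ with $z\in Z_1$, $w\in Z_{t-1}$. Writing $z=\sum_k r_kg_k$ with $g_k$ generators of $Z_1$ in degree $c+1$ and $r_k\in R$, and using that $R$ is central, one gets $z\cdot w=\sum_k g_k\cdot(r_kw)$ with each $r_kw\in Z_{t-1}$ of degree $d-(c+1)\ge(t-1)(c+1)$. By the inductive vanishing $(Z_{t-1})_{d-(c+1)}=(Z_1^{t-1})_{d-(c+1)}$, so $r_kw\in Z_1^{t-1}$ and hence $g_k\cdot(r_kw)\in Z_1\cdot Z_1^{t-1}=Z_1^t$. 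Thus $(Z_1Z_{t-1})_d\subseteq Z_1^t$ for $d\ge t(c+1)$, as required.

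The surjectivity step is the main obstacle. By \ref{surge}(1) the map $Z_1\otimes Z_{t-1}\to Z_t$ is onto in degree $d$ as soon as $\Tor_1^R(R/\mm^c,Z_{t-1})_d=0$ (here $K_0/B_0=R/\mm^c$, and the hypothesis $\chara K=0$ or $>t$ ensures $\alpha_{t-1}$ is onto). To obtain this vanishing for $d\ge t(c+1)$ I would use the sequence $0\to Z_1^{t-1}\to Z_{t-1}\to Q_{t-1}\to 0$, where $Q_{t-1}=Z_{t-1}/Z_1^{t-1}$ is, by the inductive vanishing, of finite length concentrated in degrees $\le(t-1)(c+1)-1$, and then bound the two outer Tor-modules. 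Since the first syzygy module $\mm^c$ of $R/\mm^c$ is generated in degree $c$, computing $\Tor_1^R(R/\mm^c,Q_{t-1})$ from a resolution of $R/\mm^c$ exhibits it as a subquotient of a direct sum of copies of $Q_{t-1}(-c)$, hence concentrated in degrees $\le t(c+1)-2$. On the other side, the inductive linear resolution of $Z_1^{t-1}$ has $G_1$ generated in degree $(t-1)(c+1)+1$, so $\Tor_1^R(Z_1^{t-1},R/\mm^c)$ is a subquotient of a sum of copies of $(R/\mm^c)(-(t-1)(c+1)-1)$; as $R/\mm^c$ sits in degrees $\le c-1$, it is concentrated in degrees $\le t(c+1)-1$. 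Both vanish in degrees $\ge t(c+1)$, hence so does $\Tor_1^R(R/\mm^c,Z_{t-1})$. I expect the real work to lie exactly here: the coupling of the two inductive statements is essential, since without the linear resolution of $Z_1^{t-1}$ in hand, controlling $\Tor_1^R(R/\mm^c,Z_{t-1})$ in high degrees seems out of reach.

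Combining the two steps gives $(Z_t)_d=(Z_1^t)_d$ for $d\ge t(c+1)$, i.e.\ $Q_t=Z_t/Z_1^t$ vanishes in degrees $\ge t(c+1)$, advancing the first assertion. To close the induction on the resolution, note that $Z_1^t$ is generated in the single degree $t(c+1)$, being the image of $\bigwedge^tZ_1\to Z_t$ with $Z_1$ generated in degree $c+1$. In $0\to Z_1^t\to Z_t\to Q_t\to 0$ one has $\reg_R Z_t\le t(c+1)$, while $Q_t$ is now finite length with top degree $\le t(c+1)-1$; since $R$ is Koszul, a composition series of $Q_t$ with factors $K(-a)$ (each with $\reg_R K(-a)=a$) yields $\reg_R Q_t\le t(c+1)-1$. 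Hence $\reg_R Z_1^t\le\max\{\reg_R Z_t,\reg_R Q_t+1\}\le t(c+1)$, and as $Z_1^t$ is generated in degree $t(c+1)$ this forces $\reg_R Z_1^t=t(c+1)=\indeg Z_1^t$, i.e.\ $Z_1^t$ has a linear resolution. This completes both inductive statements.
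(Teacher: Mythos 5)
Your proof is correct, and its overall skeleton coincides with the paper's: induction on $t$, reduction via Proposition \ref{surge}(1) (with $s=1$, so $K_0/B_0=R/\mm^c$, and the characteristic hypothesis guaranteeing surjectivity of $\alpha_{t-1}$) to the vanishing of $\Tor_1^R(R/\mm^c,Z_{t-1})$ in degrees $\geq t(c+1)$, an absorption step $Z_1Z_{t-1}\subseteq Z_1^t$ in high degrees resting on generation of $Z_1$ in degree $c+1$ (the paper leaves this step implicit but it is used in exactly the same way), and a regularity argument for the linear resolution of $Z_1^t$. Where you genuinely diverge is the Tor-vanishing step. You run a coupled induction, splitting $Z_{t-1}$ by $0\to Z_1^{t-1}\to Z_{t-1}\to Q_{t-1}\to 0$ and bounding the degrees of $\Tor_1^R(R/\mm^c,Z_1^{t-1})$ and $\Tor_1^R(R/\mm^c,Q_{t-1})$ separately, using the inductive linear resolution of $Z_1^{t-1}$ for the first and the inductive vanishing of $Q_{t-1}$ for the second. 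The paper gets the same vanishing in one stroke, with no coupling: by \cite[Proposition 2.4]{BCR} (equivalently \ref{regb1} applied with $I=\mm^c$, $M=R$) one has $\reg_R Z_{t-1}\leq (t-1)(c+1)$ unconditionally, hence $t_1^R(Z_{t-1})\leq (t-1)(c+1)+1$; since $R/\mm^c$ is concentrated in degrees $\leq c-1$, computing Tor from a minimal free resolution of $Z_{t-1}$ shows $\Tor_1^R(R/\mm^c,Z_{t-1})$ vanishes in degrees $\geq t_1^R(Z_{t-1})+c$, and $t_1^R(Z_{t-1})+c\leq t(c+1)$. So your closing assessment that ``the coupling of the two inductive statements is essential'' is mistaken: the regularity bound of \ref{regb1} controls $t_1^R(Z_{t-1})$ with no induction at all, and the linear resolution of $Z_1^t$ then falls out only at the very end --- the paper observes that $Z_1^t=(Z_t)_{\geq t(c+1)}$ is the truncation of a module of regularity $\leq t(c+1)$, which is in substance your composition-series argument for $Q_t$. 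Your route is valid and self-contained, but it spends the inductive hypothesis on something \ref{regb1} gives for free; the paper's version is shorter and makes clear that the vanishing assertion is independent of the linear-resolution assertion.
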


\begin{proof} We prove the first assertion by induction on $t$.
It is enough to prove that the multiplication map $Z_1\otimes
Z_{t-1} \to Z_t$ is surjective in degrees $j\geq tc+t$. By virtue of
\ref{surge}(1), it is enough to prove that $\Tor_1^R(R/\mm^c,
Z_{t-1})$ vanishes for $j\geq tc+t$. But since $R/\mm^c$ vanishes in
degree $\geq c$, it is easy to see that $\Tor_1^R(R/\mm^c, Z_{t-1})$
vanishes in degrees $\geq t_1^R(Z_{t-1})+c$. We know
\cite[Proposition 2.4]{BCR} that $\reg_R(Z_i)\leq ic+i$ for every
$i$ (here we use the fact that $R$ is Koszul). So we have
$t_1^R(Z_{t-1})-1\leq (t-1)c+(t-1)$, i.e. $t_1^R(Z_{t-1})\leq
(t-1)c+t$. Then we have $t_1(Z_{t-1})+c\leq (t-1)c+t+c=tc+t$. This
proves the first assertion. For the second, one just notes that
$\reg_R(Z_t)\leq tc+t$ and that $Z_1^t$ coincides with $Z_t$
truncated in degree $t(c+1)$. Therefore $Z_1^t$ must have an
$R$-linear resolution.
\end{proof}

We have the following consequence:

\begin{cor}
\label{prcpt} Let $S=K[x_1,\dots,x_n]$ with $\chara K=0$ or $>2c$.
One has  $H_1(\mm^c,S)^{2c}=0$  iff  $\ind(S^{(c)})\geq 2c$, i.e.
$S^{(c)}$ has the $N_{2c}$-property.
\end{cor}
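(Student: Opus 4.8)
The plan is to translate the condition $N_{2c}$ into the vanishing of Koszul homology and then feed in Theorem \ref{maincyc}. Let $T$ be the polynomial ring with $T\to S^{(c)}$ the natural surjection. By the identity $\beta^T_{i,j}(S^{(c)})=H_i(\mm^c,S)_{jc}$ established above together with the definition of the Green--Lazarsfeld index, the assertion $\ind(S^{(c)})\ge 2c$ (that is, the $N_{2c}$-property) is \emph{equivalent} to the vanishing
\[
H_i(\mm^c,S)_{jc}=0\qquad\text{for all }1\le i\le 2c\text{ and all }j\ge i+2.
\]
So the first step is purely formal: rephrase $N_{2c}$ as this finite family of vanishings of the Koszul homology of $\mm^c$ in the internal degrees divisible by $c$.

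The substantive step is to evaluate these homology groups via Theorem \ref{maincyc}. Here the key is the elementary inequality $i(c+1)\le (i+2)c$, which holds precisely for $i\le 2c$; hence for every $i$ in the range $1\le i\le 2c$ and every $j\ge i+2$ one has $jc\ge (i+2)c\ge i(c+1)$, so that $Z_i(\mm^c,S)$ coincides with $Z_1^i$ in internal degree $jc$. Since $B_i\subseteq Z_i=Z_1^i$ in that degree, I obtain $H_i(\mm^c,S)_{jc}=(H_1^i)_{jc}$, where $H_1^i$ denotes the image of $\bigwedge^i H_1(\mm^c,S)\to H_i(\mm^c,S)$. This is the only place where Theorem \ref{maincyc} (through Lemma \ref{dirsum} and the characteristic hypothesis) enters, and it is exactly why the threshold is $2c$. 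Thus $N_{2c}$ becomes the statement that $H_1^i$ vanishes in internal degree $jc$ for all $1\le i\le 2c$ and $j\ge i+2$.

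The implication $N_{2c}\Rightarrow H_1(\mm^c,S)^{2c}=0$ I expect to be clean, and it is where the \emph{second} conclusion of \ref{maincyc} pays off. Since $Z_1^{2c}$ has an $S$-linear resolution, it is generated in the single degree $2c(c+1)=(2c+2)c$, and therefore so is its image $H_1^{2c}$ in $H_{2c}$. The top case $i=2c$, $j=2c+2$ of the family says exactly that $(H_1^{2c})_{(2c+2)c}=0$; as $(2c+2)c$ is the generation degree of $H_1^{2c}$, this forces $H_1^{2c}=0$ outright. Conversely, because $(H_1^{2c})_{jc}=H_{2c}(\mm^c,S)_{jc}$ for $j\ge 2c+2$, the vanishing $H_1^{2c}=0$ is equivalent to linearity of the resolution of $S^{(c)}$ at the single homological step $2c$.

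The real difficulty sits in the converse, and I regard it as the main obstacle. To pass from the top condition $H_1^{2c}=0$ back to the full $N_{2c}$ I must rule out every intermediate failure, i.e. show that $S^{(c)}$ automatically satisfies $N_{2c-1}$ so that the only possible obstruction lives at the top degree $2c$ (where the bound $\ind(S^{(c)})\ge c+1$ of \ref{check} only settles the initial range $i\le c+1$). Concretely, for $i<2c$ the critical degree $(i+2)c$ lies \emph{strictly above} the generation degree $i(c+1)$ of $H_1^i$, so the clean generation argument used at the top no longer applies; writing $L=H_1(\mm^c,S)_{c+1}$ for the linear-syzygy part, one is reduced to proving that the products $S_{2c-i}\cdot L^{i}$ vanish in $H_i(\mm^c,S)$ for $c+2\le i<2c$. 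My plan for this is to exploit the linear resolution of each $Z_1^i$ and the module structure of Koszul homology, the delicate point being the degree bookkeeping modulo $c$ needed to detect such an obstruction at the top homological degree.
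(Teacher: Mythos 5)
Your reduction is correct as far as it goes: the translation of $N_{2c}$ into the vanishing of $H_i(\mm^c,S)_{jc}$ for $1\le i\le 2c$ and $j\ge i+2$, the identification $H_i(\mm^c,S)_{jc}=(H_1^i)_{jc}$ in that range via Theorem \ref{maincyc}, and the implication from $N_{2c}$ to $H_1(\mm^c,S)^{2c}=0$ (using that $H_1^{2c}$ is generated in the single degree $2c(c+1)$) all check out; that last implication is in fact the direction the paper leaves implicit. But the other direction, which carries the content of the corollary, is not proved. You correctly observe that the hypothesis $H_1(\mm^c,S)^{2c}=0$ only yields the top case $t_{2c}^T(S^{(c)})\le 2c+1$, and that for $c+2\le i\le 2c-1$ the critical degree $(i+2)c$ lies strictly inside the possible support of $H_1^i$, so no generation-degree argument applies. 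Your remedy, however, is to prove those intermediate cases \emph{unconditionally} (``$S^{(c)}$ automatically satisfies $N_{2c-1}$'', i.e.\ the products $S_{2c-i}\cdot L^i$ vanish in $H_i(\mm^c,S)$), and no argument is given for this. For $c\le 2$ that range is empty and Theorem \ref{check} indeed closes your proof; but for $c\ge 3$ what you are proposing amounts to proving $\ind(S^{(c)})\ge 2c-1$ unconditionally, which is strictly stronger than the best available bound $\ind(S^{(c)})\ge c+1$ of Theorem \ref{check} and \cite{BCR} and sits in the open territory of the Ottaviani--Paoletti conjecture \cite{OP}. So the plan for the hard direction is not a routine gap-filling step but a harder open problem.

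The missing idea is structural, and it makes the intermediate cases \emph{conditional} on the hypothesis rather than unconditional: $S^{(c)}$ is Cohen--Macaulay (a classical fact about Veronese subrings). For a graded Cohen--Macaulay module over the polynomial ring $T$ the maximal shifts strictly increase, $t_{i+1}^T\ge t_i^T+1$: dualizing a minimal free resolution of a Cohen--Macaulay module gives a minimal free resolution of $\Ext_T^{e}(-,T)$, $e$ the codimension, and minimal shifts strictly increase along any minimal resolution. Hence the single inequality $t_{2c}^T(S^{(c)})\le 2c+1$, which your argument already extracts from $H_1(\mm^c,S)^{2c}=0$, propagates downward: $t_i^T(S^{(c)})\le t_{2c}^T(S^{(c)})-(2c-i)\le i+1$ for every $i\le 2c$, i.e.\ $\ind(S^{(c)})\ge 2c$. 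This one observation replaces your entire program for the range $c+2\le i\le 2c-1$ and is exactly how the paper concludes.
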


\begin{proof} By virtue of \ref{maincyc} $Z_{2c}(\mm^c,S)$ coincides
with $Z_1(\mm^c,S)^{2c}$ in degrees $\geq 2c(c+1)$.
Hence,  by assumption,  $H_{2c}(\mm^c,S)$ vanishes in degrees $\geq
2c(c+1)$. This implies that $\beta^{T}_{2c,j}(S^{(c)})=0$ if $jc\geq
2c(c+1)$, that is, $j\geq  2c+2$. In other words,
$t_{2c}^T(S^{(c)})\leq 2c+1$. Since $S^{(c)}$ is Cohen-Macaulay, one
can conclude that $t_{i}^T(S^{(c)})\leq i+1$ for $i=1,\dots,2c$,
that is,  $\ind(S^{(c)})\geq 2c$.
\end{proof}

\begin{rem}The interesting aspect of Corollary \ref{prcpt} is that we know
explicitly the generators of $Z_1(\mm^c,S)$ and hence the inclusion
$Z_1(\mm^c,S)^{2c} \subset B_{2c}$ boils down to a quite concrete
statement. Unfortunately we have not been able to settle it. Note
also that Ottaviani and Paoletti conjectured that
$\ind(S^{(c)})=3c-3$ apart from few known exceptions and at least in
characteristic $0$, see \cite{OP} or \cite{BCR} for the precise
statements.  In \cite{BCR} we have proved that $\ind(S^{(c)})\geq
c+1$.
\end{rem}

As we mentioned in \cite{BCR} there are computational evidences that
the cycles of \cite[Lemma 3.4]{BCR} generate $Z_t(\mm^c,S)$. We show
below that this is the case for $t=2$ and any $c$. To this end we
recall that for every monomial $b$ of degree $c-1$ and for variables
$x_j,x_k$ we have an element $z_b(x_j,x_k)=x_j[bx_k]-x_k[bx_j]\in
Z_1(\mm^c,S)$. It is well-know and easy to see that the elements
$z_b(x_j,x_k)$ generate $Z_1(\mm^c,S)$. For a monomial $a$ we set
$\max(a)=\max\{ i : x_i|a\}$ and $\min(a)=\min\{ i : x_i|a\}$ . More
precisely,  the elements $z_b(x_j,x_k)$ with $j<k$ and $\max(b)\leq
k$ form a Gr\"obner basis of $Z_1(\mm^c,S)$ with respect to any term
order selecting $x_j[bx_k]$ as leading term of $z_b(x_j,x_k)$. We
have:
\begin{prop}
\label{gen2} If $K$ has characteristic $\neq 2$ then the module
$Z_2(\mm^c,S)$ is generated by two types of elements:
\begin{itemize}
\item[(1)] The elements of \cite[Lemma 3.4]{BCR} of degree $2c+1$,
\item[(2)] and by the elements of $Z_1(\mm^c,S)^2$ of degree $2c+2$, that is,
the elements of the form $z_a(x_i,x_j)z_b(x_h,x_k)$.
\end{itemize}
\end{prop}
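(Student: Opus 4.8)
The plan is to locate the degrees in which $Z_2:=Z_2(\mm^c,S)$ needs generators and then to match the two relevant graded pieces with the two stated families. First I would bound the degrees of a generating set. Since $\dim S/\mm^c=0$ and $\reg_S(\mm^c)=c$, Proposition \ref{regb2} gives $\reg_S(Z_2)\le 2(c+1)=2c+2$, so $Z_2$ is generated in degrees $\le 2c+2$. On the other hand $(Z_2)_j=0$ for $j\le 2c$: a chain in $K_2(\mm^c,S)$ of degree $\le 2c$ has scalar coefficients, and such a chain $\sum_{u<v}\lambda_{uv}[u,v]$ is a cycle only if for every generator $w$ the combination $\sum_u\pm\lambda_{uw}\,u$ of the distinct, hence linearly independent, degree-$c$ monomials vanishes, forcing all $\lambda_{uv}=0$. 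Thus the generators live only in degrees $2c+1$ and $2c+2$, and it suffices to span $(Z_2)_{2c+1}$ by the elements of type (1) and $(Z_2)_{2c+2}$ by the elements of type (2).

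The degree $2c+2$ part is immediate. Theorem \ref{maincyc}, applicable because $\chara K\neq 2$, says that $Z_2/Z_1^2$ vanishes in degrees $\ge 2(c+1)$, so $(Z_2)_{2c+2}=(Z_1^2)_{2c+2}$, where $Z_1:=Z_1(\mm^c,S)$. Since $\mm^c$ has a linear resolution, $Z_1$ is generated in degree $c+1$, and as $Z_1$ vanishes below degree $c+1$ the space $(\wedge^2 Z_1)_{2c+2}$ is spanned by wedges $g\wedge g'$ of two elements of $(Z_1)_{c+1}$, i.e.\ of two $K$-combinations of the generators $z_b(x_j,x_k)$. Passing to the image in $Z_1^2$ shows $(Z_2)_{2c+2}$ is spanned by the products $z_a(x_i,x_j)\,z_b(x_h,x_k)$, which are exactly the elements of type (2).

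The heart of the argument is the degree $2c+1$ part, where I would exploit that $2$ is invertible. By Lemma \ref{le1} every $f\in(Z_2)_{2c+1}$ satisfies $2f=\sum_u[u].b_u$ with each component $b_u$ a $1$-cycle; since $\deg b_u=c+1$, each $b_u$ lies in $(Z_1)_{c+1}$ and is a $K$-combination of the generators $z_b(x_j,x_k)$. Moreover the components are coupled antisymmetrically: the coefficient of $[v]$ in $b_u$ is the negative of the coefficient of $[u]$ in $b_v$. A direct computation shows that the type-(1) cycles of \ref{newcycles} (taken with $\alpha=1$, whence degree $2c+1$) have precisely this shape: each nonzero component of $W(x_i,x_j,x_k;b_1,b_2)$ equals, up to sign, a single generator $z_{b_1}(\cdot,\cdot)$ or $z_{b_2}(\cdot,\cdot)$. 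I would then fix the term order recalled before the statement, under which the $z_b(x_j,x_k)$ with $j<k$ and $\max(b)\le k$ form a Gr\"obner basis of $Z_1$ with leading term $x_j[bx_k]$, extend it to $K_2$, and reduce $f$ by subtracting suitable type-(1) cycles chosen to cancel its leading component, inducting on the term order. This stays in degree $2c+1$ throughout and terminates, expressing $f$ as a $K$-combination of type-(1) cycles; together with the previous paragraph this proves the proposition.

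The main obstacle is precisely this last reduction: one must verify that for every cycle the leading component can always be matched by the corresponding component of some type-(1) cycle, equivalently that the leading terms of the $W$'s generate the leading-term module of $(Z_2)_{2c+1}$. The antisymmetric coupling of the components together with the Gr\"obner basis of $Z_1$ is what makes the cancellation possible, and the delicate point is organizing the bookkeeping so that the process terminates while remaining within degree $2c+1$; the remaining steps are formal.
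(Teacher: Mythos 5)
Your reduction of the problem is sound and runs parallel to the paper's own proof: the regularity bound from \ref{regb2} confines minimal generators of $Z_2$ to degrees $\le 2c+2$ (and the easy vanishing below $2c+1$ leaves only degrees $2c+1$ and $2c+2$); Theorem \ref{maincyc} with $t=2$ (this is where $\chara K\neq 2$ enters) identifies $(Z_2)_{2c+2}$ with $(Z_1^2)_{2c+2}$, which is spanned by the type-(2) products because $Z_1$ lives in degrees $\ge c+1$ and is generated there; and your lift $2f=\sum_u[u].b_u$ with $b_u\in (Z_1)_{c+1}$ via Lemma \ref{le1} is exactly the mechanism the paper uses (it works with the map $\alpha_1:Z_1(\mm^c,Z_1(\mm^c,S))\to Z_2(\mm^c,S)$, surjective since $\alpha_1\circ\beta_1=2\cdot\id$, and shows that the lifted cycles (\ref{cycy}) generate $Z_1(\mm^c,Z_1(\mm^c,S))$ in degree $2c+1$).

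However, there is a genuine gap, and you point at it yourself: the assertion that the leading component of every cycle can be cancelled by a type-(1) cycle, and that the reduction terminates, is precisely the content of the proposition in degree $2c+1$ --- it is not ``formal,'' and nothing in your proposal establishes it. The paper closes exactly this hole with a concrete dichotomy. Take $[u]$ lex-largest among the Koszul basis elements appearing in $F\in Z_1(\mm^c,Z_1)_{2c+1}$ and let $x_j[bx_k]$ (with $j<k$, $\max(b)\le k$) be the leading term of its coefficient $f\in (Z_1)_{c+1}$, using the Gr\"obner basis of $Z_1$. If $\min(u)<j$, then the cycle (\ref{cycy}) built with $i=\min(u)$ and $a=u/x_i$ has leading term exactly $[u]\otimes x_j[bx_k]$, because its other tensor factors $[ax_j]$, $[ax_k]$ are lex-smaller than $u=ax_i$; subtracting a scalar multiple strictly lowers the leading term of $F$, and induction on the term order finishes. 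The crucial point is that the bad case $\min(u)\ge j$ \emph{cannot occur}: from $0=\phi(F)=uf+\sum_{v<u}vg_v$, the monomial $x_ju$ in the coefficient of the basis vector $[bx_k]$ must cancel, forcing $x_ju=x_sv$ for some $v<u$ and some variable $x_s$; if $s=j$ then $v=u$, and if $s\neq j$ then $x_s\mid u$, so $s\ge\min(u)\ge j$ gives $s>j$ and $v=x_ju/x_s>u$ in the lex order --- a contradiction either way. Without this argument (or an equivalent one) your reduction may encounter a leading term that no type-(1) cycle can match, so as written the proposal does not prove the proposition at its central step.
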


\begin{proof}
Consider the map
$$\alpha_1: Z_1(\mm^c, Z_1(\mm^c,S))\to Z_2(\mm^c,S)$$
of Section \ref{Canonical maps}. We know that $Z_2(\mm^c,S)$ has
regularity $\leq 2c+2$ and the only generators of degree $2c+2$ are
the elements of (2). So we only need to deal with the elements of
degree $2c+1$.  To this end we look at the component of degree
$2c+1$ of $\alpha_1$. Let $a,b$ be monomials of degree $c-1$. The
element
\begin{equation}
\label{cycy} [ax_i]\otimes z_b(x_j,x_k)+ [ax_k]\otimes
z_b(x_i,x_j)+[ax_j]\otimes z_b(x_k,x_i)
\end{equation}
belong to $Z_1(\mm^c, Z_1(\mm^c,S))$ and has degree $2c+1$. The
image under $\alpha_1$ of the elements in (\ref{cycy}) are exactly
the cycles of \cite[Lemma 3.4]{BCR} in $Z_2(\mm^c,S)$. Since
$\alpha_1$ is surjective, to complete the proof it is enough to
prove the following statement: \medskip

\noindent {\bf Claim}: The cycles described in (\ref{cycy}) generate
$Z_1(\mm^c, Z_1(\mm^c,S))$ in degree $2c+1$.
\medskip

Let $F\in Z_1(\mm^c, Z_1(\mm^c,S))$ be an element of degree $2c+1$.
So $F$ is a sum of elements of the form $[u]\otimes f$ with $u$ a
monomial of degree $c$ and $f\in Z_1(\mm^c,S)$ with $\deg(f)=c+1$.
Choose $[u]$ to be the largest in the lexicographic  order induced
by $x_1>\dots>x_n$ and look at the coefficient $f$ of $[u]$ in $F$,
i.e.
$$F=[u]\otimes f + \mbox{ sum of terms } [v]\otimes g \mbox{ with } v<u.$$
Let $x_j[bx_k]$ be the leading term of $f$ with $j<k$ and
$\max(b)\leq k$. If $\min(u)<j$ then we may add a suitable scalar
multiple of (\ref{cycy}) to ``kill" the leading term of $F$ and we
are done. If instead $\min(u)\geq j$, then, since
$$0=\phi(F)=u f+ \mbox{ sum of terms } v g \mbox{ with } v<u$$
we have that $x_ju[bx_k]$ must cancel, and so $x_ju=x_sv$ for some
$v<u$ in the lex-order. But this is impossible.
\end{proof}


\begin{thebibliography}{99}

\bibitem{ACI}
L.~L.~Avramov, A.~Conca and S.~Iyengar, \emph{Resolutions of Koszul
algebra quotients of polynomial rings.} arXiv:0904.2843, to appear
in Math.~Res.~Lett.

\bibitem{AE}
L.~L.~Avramov and D.~Eisenbud, \emph{Regularity of modules over a
Koszul algebra.} J.~Algebra {\bf 153} (1992), 85--90.

\bibitem{BH}
W.~Bruns and J.~Herzog, \emph{ Cohen-Macaulay rings. Rev. ed.}
Cambridge Studies in Advanced Mathematics {\bf 39}, Cambridge
University Press 1998.

\bibitem{BCR}
W.~Bruns, A.~Conca and T.~R\"omer, \emph{ Koszul homology and
syzygies of Veronese subalgebras.} arXiv:0902.2431, to appear in
Math.~Ann.

\bibitem{CA}
G.~Caviglia, \emph{ Bounds on the Castelnuovo-Mumford regularity of
tensor products.} Proc.~ Amer.~ Math.~ Soc.~ {\bf 135} (2007),
1949--1957.


\bibitem{CHTV}
A.~Conca, J.~Herzog, N.~V.~Trung and G.~Valla, \emph{Diagonal
subalgebras of bigraded algebras and embeddings of blow-ups of
projective spaces.} Amer.~J.~Math. {\bf 119} (1997), 859--901.

\bibitem{C}
A.~Conca, \emph{ Regularity jumps for powers of ideals.} In:
Commutative algebra. Lect.~Notes Pure Appl.~Math., 244, Chapman \&
Hall/CRC, Boca Raton, FL, 2006, pp. 21--32.


\bibitem{EHU}
D.~Eisenbud, C.~Huneke, and B.~Ulrich, \emph{ The regularity of Tor
and graded Betti numbers.} Amer.~J.~Math.~{\bf 128} (2006),
573--605.


\bibitem{EK}
S.~Eliahou and M.~Kervaire, \emph{ Minimal resolutions of some
monomial ideals.} J.~Algebra {\bf 129} (1990), 1--25.


\bibitem{GR2}
M.~L.~Green, \emph{Koszul cohomology and the geometry of projective
varieties. II.} J. Differ. Geom. {\bf 20} (1984), 279--289.

\bibitem{HSS}
M.~Hering, H.~Schenck and G.~G.~Smith, \emph{ Syzygies, multigraded
regularity and toric varieties}. Compos.~Math.~{\bf 142} (2006),
1499--1506.

\bibitem{OP}
G.~Ottaviani and R.~Paoletti, \emph{Syzygies of Veronese
embeddings.} Compos. Math. {\bf 125} (2001), 31--37.

\bibitem{S}
B.~Sturmfels, \emph{Four counterexamples in combinatorial algebraic
geometry.} J.~Algebra {\bf 230} (2000), 282--294.
\end{thebibliography}
\end{document}